\newtheorem{lemma}{LEMMA}[section]
\newtheorem{corollary}[lemma]{COROLLARY}
\newtheorem{theorem}[lemma]{THEOREM}
\newtheorem{remark}[lemma]{REMARK}
\newtheorem{remarks}[lemma]{REMARKS}
\newcommand{\real}{\mathbbm{R}}
\newcommand{\nat}{\mathbbm{N}}
\newcommand{\ganz}{\mathbbm{Z}}
\newcommand{\limn}{\lim_{n \to \infty}}
\renewcommand{\a}{\alpha}
\newcommand{\g}{\gamma}
\newcommand{\vp}{\varphi}
\newcommand{\ve}{\varepsilon}
\newcommand{\on}{\quad\text{ on }}
\newcommand{\und}{\quad\mbox{ and }\quad}
\newcommand{\inv}{^{-1}}
\newcommand{\ov}{\overline}
\newcommand{\V}{\mathcal V}  
\newcommand{\W}{\mathcal W}  
\newcommand{\C}{\mathcal C}
\renewcommand{\H}{{\mathcal H}}
\newcommand{\B}{\mathcal B}
\renewcommand{\L}{\mathcal L}
\newcommand{\N}{\mathcal N}
\newcommand{\M}{\mathcal M}
\newcommand{\U}{{\mathcal U}}
\newcommand{\Po}{\mathcal P}
\newcommand{\supp}{\operatorname*{supp}}
\newcommand{\itemframe}%
{\setlength{\parskip}{10pt}\begin{enumerate} \setlength{\topsep}{10pt}%
\setlength{\itemsep}{15pt}\setlength{\parsep}{5pt}}
\newcommand{\vx}{\ve_x}
\newcommand{\uc}{{U^c}}
\newcommand{\vc}{{V^c}}
\newcommand{\ureg}{\partial_{\mbox{\rm\small reg}}U}
\newcommand{\uirr}{\partial_{\mbox{\rm \small irr}} U}
\newcommand{\hypo}{{}^\ast \H(U)}
\title{On Evans' and Choquet's theorems for polar sets}
\author{Wolfhard Hansen and Ivan Netuka\thanks{Supported by CRC 1283 of the German Research Council (DFG).
    We thank our colleague A.\,Grigor'yan for  helping us with  useful illustrations.}}
\date{}
\begin{document}
\maketitle
  



\section{Introduction and main results}\label{intro}

  By classical results of  G.C.\,Evans and G.\,Choquet on ``good kernels $G$ in potential theory'', 
  for every  polar $K_\sigma$-set $P$,  there exists a  finite measure~$\mu$ on~$P$ such that $G\mu=\infty$
  on $P$,  and a set $P$ admits a  finite measure $\mu$ on $P$ such that $\{G\mu=\infty\}=P$
  if and only if $P$ is a polar $G_\delta$-set.
  
 We recall that Evans' theorem yields the solutions of the generalized Dirichlet problem for   open sets  
 by the Perron-Wiener-Brelot method using only \emph{harmonic} upper and lower functions
 (see \cite{cornea} and Corollary \ref{evans-corollary}). 

In this paper we intend to show that such results  can be obtained,
 by elementary ``metric'' considerations
 and without using any potential theory,
 for general kernels~$G$  locally satisfying
 \begin{equation*}
   G(x,z)\wedge G(y,z)\le C G(x,y).\footnote{We write $a\wedge b$
     for the minimum and $a\vee b$ for the maximum of $a$ and $b$.} 
 \end{equation*} 
The particular case $G(x,y)=|x-y|^{\a-N}$ on $\real^N$, 
$2<\a<N$, solves a long-standing  open problem
(see \cite[p.\,407, III.1.1]{landkof}).

 {\bfseries ASSUMPTION.}  {\sl
     Let $X$ be   a~locally compact  space  with countable base and let $G\colon X\times X\to [0,\infty]$
  be Borel measurable,  $G>0$, such that  the following holds:
 \begin{itemize}
 \item[\rm (1)]
   For every $x\in X$,    $\lim_{y\to x} G(x,y)=G(x,x)=\infty$ and $G(x,\cdot)$
   is bounded outside every neighborhood of $x$. 
 \item[\rm (2)]
   $G$ has the  local triangle property.
 \end{itemize}
}  

We recall that $G$ is said to have  the \emph{triangle property}    if, for some  $C>0$, 
  \begin{equation}\label{TP}
    G(x,z)\wedge G(y,z)\le C G(x,y) \qquad\mbox{ for all }x,y,z\in X,
       \end{equation}
  and that  $G$ has the \emph{local triangle property}  if every point in $X$ admits an open neighborhood $U$
    such that $G|_{U\times U}$ has the triangle property.

   It is well known that $G$ has the triangle property if and only if there exist a~metric~$d$ on~$X$ and
   $\g,C\in (0,\infty)$ such that, for all $x,y\in X$,
   \begin{equation}\label{triangle}
   C\inv  d(x,y)^{-\g}\le G(x,y)\le C d(x,y)^{-\g}, 
   \end{equation}
  where, by assumption (1),  every such metric yields the initial topology on $X$.
   
   To be more explicit we observe that (\ref{TP}) means that $\tilde G(x,y):=G(y,x)\le C G(x,y)$ (take $z=x$) and that
   $\rho:=G\inv+\tilde G\inv$ is a \emph{quasi-metric} on $X$, that is, for some $C>0$,
   $\rho(x,y)\le C(\rho(x,z)+\rho(y,z))$
   for all    $x,y,z\in X$.
   A metric $d$ satisfying~(\ref{triangle}) is then obtained fixing $\g\ge 2 \log_2 C$ 
   and defining
   \begin{equation*}
     d(x,y):=\inf\{\sum\nolimits_{0< j<n} \rho(z_j,z_{j+1})^{1/\g}\colon
     n\ge 2,\,  z_1=x, z_n=y,\,   z_j\in X\}
     \end{equation*}
     (see \cite[Proposition 14.5]{heinonen}).
     Conversely, every $G$ satisfying~(\ref{triangle})   has the
     triangle property, since $d(x,z)\vee d(y,z)\ge d(x,y)/2$. More generally, (\ref{TP})
     holds~if
     \begin{equation*}
           g\circ d_0\le G\le c g\circ d_0
     \end{equation*}
     for some metric $d_0$ for $X$ and
    a~decreasing function $g$  on $[0,\sup\{d_0(x,y)\colon x,y\in X\}]$
    satisfying $0<g(r/2)\le cg(r)<\infty$ for $r>0$ and  $\lim_{r\to 0}g(r)=g(0)=\infty$.

     In particular, the local triangle property holds for the classical Green function not only for domains
     in $\real^N$, $N\ge 3$, 
     but also on domains $X$ in $\real^2$
     such that $\real^2\setminus X$ is not polar ($\log (2/r)\le 2\log(1/r)$ for $0<r\le 1/2$) and  as well for   Green
     functions associated with very general  L\'evy processes (see \cite{hansen-liouville-wiener}). 

 For every  Borel set $B$ in $X$, let $\M(B)$  denote the set of all \emph{finite} positive Radon measures~$\mu$ on~$X$
 such that  $\mu(X\setminus B)=0$, and let $\M_ \eta(B)$, $\eta>0$, be the set of all measures $\mu\in\M(B)$ such that
 the total mass $\|\mu\|:=\mu(B)$ is at most $\eta$. Let 
   \begin{equation*}
    G\mu(x):=\int G(x,y)\,d\mu(y), \qquad  \mu\in \M(X), \, x\in X.
  \end{equation*}
  By assumption (1), for every $\mu\in\M(X)$,
  \begin{equation}\label{gmu-finite}
    G\mu<\infty \on X\setminus \supp(\mu).
    \end{equation}

  For every $A\subset X$, let
  \begin{equation*}
    c^\ast(A):=\inf\{\|\mu\|\colon \mu\in \M(X), \,  G\mu\ge 1\mbox{ on }A\}.
  \end{equation*}
  We observe that $c^\ast(A)=0$ if and only if there exists $\mu\in \M(X)$ such that $G\mu=\infty$ on $A$.
  Indeed, if there are $\mu_n \in\M(X)$, $n\in\nat$,  such that $\|\mu_n\|< 2^{-n}$ and $G\mu_n\ge 1$ on $A$, then
  obviously $\mu:=\sum_{n\in\nat} \mu_n\in \M_1(X)$ and $G\mu=\infty$ on $A$. Conversely, if~$\mu\in \M(X)$
  with $G\mu=\infty$ on $A$, then $\nu_n:=(1/n)\mu$ satisfies $G\nu_n=\infty\ge 1$ on $A$ and~$\limn \|\nu_n\|=0$,
  hence $c^\ast(A)=0$. 

  As already indicated, the main results of this paper are the next two theorems obtained by G.C.\,Evans
 \cite{evans}   in the classical case (where $c^\ast(P)$ is
  the outer capacity of~$P$,  and $P$ is polar if and only if $c^\ast(P)=0$, cf.\ \cite[Corollary 5.5.7]{AG})  and 
   G.\,Choquet for ``good kernels in potential theory'' \cite{choquet-cr,choquet-gd}.

  \begin{theorem}\label{main-first}
    Let $P$ be an $F_\sigma$-set in $X$,   
    $P=\bigcup_{m\in\nat} A_m$ with closed sets~$A_m$.
Then $c^\ast(P)=0$ if and only if there is  a measure $\mu\in \M(P)$ with
       $G\mu=\infty$ on $P$.
     \end{theorem}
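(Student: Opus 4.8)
```latex
\paragraph{Proof proposal.}
The plan is to prove the two implications separately, with the ``only if''
direction carrying all the weight. The ``if'' direction is immediate from the
discussion preceding the theorem: if $\mu\in\M(P)$ satisfies $G\mu=\infty$ on
$P$, then setting $\nu_n:=(1/n)\mu$ gives $\|\nu_n\|\to 0$ with $G\nu_n\ge 1$ on
$P$, so $c^\ast(P)=0$.

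For the ``only if'' direction, suppose $c^\ast(P)=0$. By the observation
recorded just before the theorem, $c^\ast(P)=0$ is equivalent to the existence
of some $\mu\in\M(X)$ with $G\mu=\infty$ on $P$; the real content is to produce
such a measure that is moreover \emph{carried by $P$ itself}, i.e.\ in $\M(P)$.
First I would reduce to a single compact piece. Since $P=\bigcup_m A_m$ with
each $A_m$ closed, and since $X$ has a countable base (hence each $A_m$ is a
countable union of compact sets), I may assume $P=\bigcup_k K_k$ is an
increasing union of compact sets with $c^\ast(P)=0$, whence $c^\ast(K_k)=0$ for
every $k$. The aim is to build, for each $k$, a measure $\mu_k\in\M(K_k)$ with
$G\mu_k$ large on $K_k$ and $\|\mu_k\|$ small, and then sum.

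The key step is therefore: \emph{if $K$ is compact with $c^\ast(K)=0$, then for
every $\eta>0$ there exists $\mu\in\M_\eta(K)$ with $G\mu\ge 1$ on $K$.} Here is
where I expect the main obstacle. The definition of $c^\ast(K)=0$ only gives
measures $\nu\in\M(X)$ (supported anywhere) with $G\nu\ge 1$ on $K$ and
$\|\nu\|$ small; I must transport the mass onto $K$ without destroying the lower
bound $G\nu\ge 1$ on $K$. This is exactly where the local triangle property
\eqref{TP} enters: using the metric $d$ from \eqref{triangle}, cover $K$ by
finitely many small balls on which \eqref{TP} holds, and for each $y\in\supp\nu$
replace $y$ by a nearby point $y'\in K$ (or redistribute its mass to $K$),
controlling the change of $G(x,\cdot)$ at points $x\in K$ via
$G(x,y)\le C\,G(x,y')$ when $x,y,y'$ are close. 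One must check that the total
mass stays bounded by a fixed multiple of $\|\nu\|$, so that smallness is
preserved up to the constant $C$; this localization/covering argument, and the
bookkeeping that the projected measure still satisfies $G\mu\ge c$ on $K$ for a
dimensional constant $c>0$ (renormalize afterwards), is the technical heart.

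Granting the compact case, I finish by iteration. Having produced, for each
$k$, a measure $\mu_k\in\M(K_k)\subset\M(P)$ with $G\mu_k\ge 1$ on $K_k$ and
$\|\mu_k\|<2^{-k}$, I set $\mu:=\sum_k \mu_k$. Then $\mu\in\M(P)$ since each
$\mu_k$ is carried by $K_k\subset P$ and $\|\mu\|\le\sum_k 2^{-k}<\infty$. For
any $x\in P$ we have $x\in K_k$ for all large $k$ (using that the $K_k$
increase to $P$), so $G\mu(x)\ge\sum_{k\ge k_0}G\mu_k(x)\ge\sum_{k\ge
k_0}1=\infty$. Hence $G\mu=\infty$ on $P$ with $\mu\in\M(P)$, completing the
proof.
```
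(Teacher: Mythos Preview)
Your plan is correct and follows essentially the paper's route: the ``transport onto $K$'' step you flag as the technical heart is exactly the paper's Lemma~\ref{key}, proved by decomposing $X\setminus A$ into geometric shells $\{3r\le d(\cdot,A)<4r\}$ and sweeping the mass in each shell to a nearby point of $A$ (giving $\nu\in\M(A)$ with $\|\nu\|=\|\mu\|$ and $G\nu\ge 3^{-\gamma}G\mu$ on $A$). The only packaging differences are that the paper applies this lemma directly to the closed $A_m$ (no reduction to compacts) starting from a single $\mu\in\M(X)$ with $G\mu=\infty$ on $P$, so each resulting $\nu_m\in\M(A_m)$ already has $G\nu_m=\infty$ on $A_m$ and one simply sums $\sum_m 2^{-m}\nu_m$.
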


   \begin{corollary}\label{p0-evans}
        Let $P $ be  an $F_\sigma$-set in~$X$ with $c^\ast(P)=0$.
     Let $P_0\subset P$ be countable and $A_m$, $m\in \nat$,  be closed sets such that  $\bigcup_{m\in\nat} A_m=P$
     and every intersection $P_0\cap A_m$ is dense in $A_m$.
     
 Then there is a~measure  $\mu\in \M(P_0)$  such that    $G\mu=\infty$ on $P$.
     \end{corollary}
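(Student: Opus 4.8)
My plan is to reduce the corollary to a single closed set and then solve a \emph{transfer problem}: replace an Evans-type measure by one carried by the dense countable set $P_0$.

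First I would reduce to one closed set. It suffices to produce, for each $m$, a measure $\mu_m\in\M(P_0\cap A_m)$ with $G\mu_m=\infty$ on $A_m$: since $G(t\mu_m)=\infty$ for every $t>0$, I may rescale so that $\|\mu_m\|\le 2^{-m}$, and then $\mu:=\sum_m\mu_m\in\M(P_0)$ has $\|\mu\|\le1$ and, for every $x\in P$, choosing $m$ with $x\in A_m$ gives $G\mu(x)\ge G\mu_m(x)=\infty$. Fixing $m$ and writing $A:=A_m$, $D:=P_0\cap A$ (so $D$ is dense in $A$ and, by monotonicity of $c^\ast$ under inclusion, $c^\ast(A)\le c^\ast(P)=0$), the task becomes: \emph{there is $\mu\in\M(D)$ with $G\mu=\infty$ on $A$.} By the elementary series argument already used in the text, it suffices that for every $k$ there is $\mu_k\in\M(D)$ with $G\mu_k\ge 1$ on $A$ and $\|\mu_k\|\le 2^{-k}$; summing the $\mu_k$ then yields $G\mu=\infty$ on $A$.

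Second, I would localise and install the metric. Covering $X$ by countably many open sets carrying the triangle property and refining to relatively compact pieces, I may assume $A$ is compact and contained in one such neighbourhood $U$, with a metric $d$ on $U$ satisfying $C\inv d(x,y)^{-\g}\le G(x,y)\le C\,d(x,y)^{-\g}$. Density of $D$ then furnishes, for each small $\rho>0$, a Borel map $\phi\colon A\to D$ with $d(\phi(y),y)<\rho$ (partition $A$ into Borel pieces each inside a $\rho$-ball centred at a point of $D$). Using $c^\ast(A)=0$, for each $k$ I would pick $\sigma_k\in\M(X)$ with $G\sigma_k\ge1$ on $A$ and $\|\sigma_k\|\le 2^{-k}$, chosen moreover with $\sigma_k$ non-atomic and $G\sigma_k$ finite on $A$; the transferred measure is the pushforward $\mu_k:=\phi_\ast\sigma_k=\sum_{q\in D}\sigma_k(\phi\inv(q))\,\ve_q\in\M(D)$, with $\|\mu_k\|=\|\sigma_k\|\le 2^{-k}$. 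The local triangle property supplies the key comparison: if $d(\phi(y),y)<\rho\le\tfrac12 d(x,y)$ then $d(x,\phi(y))<\tfrac32 d(x,y)$, hence
\begin{equation*}
 G(x,\phi(y))\ge C\inv\bigl(\tfrac32 d(x,y)\bigr)^{-\g}\ge C^{-2}\bigl(\tfrac32\bigr)^{-\g}\,G(x,y)=:c_0\,G(x,y).
\end{equation*}
Consequently, for $x\in A$,
\begin{equation*}
 G\mu_k(x)=\int G(x,\phi(y))\,d\sigma_k(y)\ge c_0\!\int_{\{d(x,y)\ge2\rho\}}\!\!G(x,y)\,d\sigma_k(y)
 =c_0\Bigl(G\sigma_k(x)-\int_{B(x,2\rho)}\!\!G(x,y)\,d\sigma_k(y)\Bigr).
\end{equation*}
Since $G\sigma_k\ge1$ on $A$, it remains to choose $\rho=\rho_k$ so small that the near-diagonal integral $\int_{B(x,2\rho)}G(x,y)\,d\sigma_k(y)$ is at most $\tfrac12$ \emph{uniformly} in $x\in A$; then $G\mu_k\ge c_0/2$ on $A$, and after dividing by $c_0/2$ and summing over $k$ I obtain the desired $\mu\in\M(D)$ with $G\mu=\infty$ on $A$.

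The main obstacle is exactly this uniform near-diagonal control of the transfer loss. For fixed $x$ the quantity $\int_{B(x,2\rho)}G(x,y)\,d\sigma_k(y)$ tends to $0$ as $\rho\to0$ once $\sigma_k$ is non-atomic, because the finite measure $G(x,\cdot)\,d\sigma_k$ charges the singular point $\{x\}$ with mass $G(x,x)\,\sigma_k(\{x\})=0$; on the compact set $A$ a Dini-type argument should upgrade this to uniform convergence and thereby deliver $\rho_k$. The two delicate points I must secure are that the near-minimisers $\sigma_k$ can indeed be chosen with $\sigma_k$ non-atomic and $G\sigma_k$ finite on $A$ (so that the subtraction above is legitimate and the near-diagonal mass is negligible), and that the passage from the \emph{local} triangle property to a single metric $d$ is carried out on a neighbourhood large enough to contain both $x$ and $\phi(y)$; both are arranged by the initial localisation of $A$ into compact pieces sitting inside triangle-property neighbourhoods and treated one at a time.
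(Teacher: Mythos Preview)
Your reduction to a single compact $A$ with dense countable $D\subset A$ is correct and matches the paper. The transfer step, however, has a genuine gap.

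The minor issue---that $\phi$ is defined only on $A$ while you take $\sigma_k\in\M(X)$---is fixable by first sweeping onto $A$ via Lemma~\ref{key}(b). The real problem is the uniform near-diagonal control. You need $G\sigma_k<\infty$ on $A$ for the subtraction even to make sense, and you need more: if $G\sigma_k$ is finite but \emph{unbounded} on $A$ (which is what one expects for near-minimisers on a polar set; the obvious choice $\sigma_k=2^{-k}\tau$ with $G\tau=\infty$ on $A$ is even worse), then the far contribution $\int_{d(x,\cdot)\ge 2\rho}G(x,\cdot)\,d\sigma_k$ is bounded by $(2\rho)^{-\gamma}\|\sigma_k\|$ uniformly in $x$, so the near contribution $f_\rho(x)=\int_{B(x,2\rho)}G(x,\cdot)\,d\sigma_k$ must be unbounded on $A$ for every $\rho>0$, and no Dini argument can save you. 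Even granting $G\sigma_k$ bounded, your Dini appeal is on the wrong side of semicontinuity: the integrand $1_{B(x,2\rho)}(y)\,G(x,y)$ is \emph{lower} semicontinuous in $x$, hence so is each $f_\rho$, and a decreasing family of l.s.c.\ functions tending pointwise to $0$ on a compact need not do so uniformly. To rescue your scheme you would first have to arrange $G\sigma_k$ \emph{continuous} on $X$ (decompose via Lusin and the Evans--Vasilesco continuity principle, then use Dini on the continuous partial sums to recover $G\ge 1$ on $A$), after which uniform control can indeed be proved by a compactness-plus-dominated-convergence argument. This works, but it is substantially heavier than the two ``delicate points'' you flag suggest.

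The paper avoids all of this with a soft argument. Starting from $\mu_m\in\M(A_m)$ with $G\mu_m=\infty$ on $A_m$ (Theorem~\ref{main-first}), one uses only that $G\mu_m>2^m$ on the compact $A_m$. Lemma~\ref{weak}---weak$^\ast$-lower semicontinuity of $\nu\mapsto G\nu(x)$ together with lower semicontinuity of $G\nu$ and compactness of $A_m$---says this strict inequality persists in a weak$^\ast$-neighbourhood of $\mu_m$. The discrete approximants~(\ref{approx}) supported on $P_0\cap A_m$ converge weak$^\ast$ to $\mu_m$, so one of them, call it $\nu_m$, satisfies $G\nu_m>2^m$ on $A_m$; then $\sum_m 2^{-m}\nu_m$ does the job. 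No finiteness of potentials, no non-atomicity, and no near-diagonal estimate is required.
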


   \begin{theorem}\label{main-second}
Let  $P\subset X$ and let $P_0$ be a~countable dense set in $P$.  The following are equivalent:
    \begin{itemize}
    \item[\rm(i)] $P$ is a $G_\delta$-set and $c^\ast(P)=0$.
            \item[\rm (ii)] There exists $\mu\in\M(P)$ such that $\{G\mu=\infty\}=P$.
        \item[\rm (iii)] There exists $\mu\in\M(P_0)$ such that $\{G\mu=\infty\}=P$.
             \end{itemize}
    \end{theorem}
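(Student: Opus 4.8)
The plan is to prove the cycle of implications (iii) $\Rightarrow$ (ii) $\Rightarrow$ (i) $\Rightarrow$ (iii). The first implication is immediate: since $P_0\subset P$ we have $\M(P_0)\subset\M(P)$, so a measure witnessing (iii) witnesses (ii) as well.

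For (ii) $\Rightarrow$ (i), suppose $\mu\in\M(P)$ with $\{G\mu=\infty\}=P$. Then $G\mu=\infty$ on $P$ forces $c^\ast(P)=0$ by the observation preceding Theorem \ref{main-first}, so it only remains to show that $\{G\mu=\infty\}$ is a $G_\delta$-set for \emph{every} $\mu\in\M(X)$. First, by (\ref{gmu-finite}) we have $\{G\mu=\infty\}\subset\supp\mu$. Cover $X$ by countably many open sets $U_j$ on which $G$ has the triangle property, and fix comparisons $c_j\inv d_j(x,y)^{-\gamma_j}\le G(x,y)\le c_j\,d_j(x,y)^{-\gamma_j}$ on $U_j\times U_j$ as in (\ref{triangle}). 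Writing $\mu=\mu|_{U_j}+\mu|_{X\setminus U_j}$, the second summand has support in the closed set $X\setminus U_j$, so by (\ref{gmu-finite}) its potential is finite throughout $U_j$; hence on $U_j$ we get $\{G\mu=\infty\}\cap U_j=\{G(\mu|_{U_j})=\infty\}\cap U_j$. On $U_j$ the potential $G(\mu|_{U_j})$ is comparable to $H_j:=\int_{U_j}d_j(\cdot,y)^{-\gamma_j}\,d(\mu|_{U_j})(y)$, and $H_j$ is lower semicontinuous by Fatou's lemma, since $x\mapsto d_j(x,y)^{-\gamma_j}$ is lower semicontinuous. As comparable potentials share the same infinity set, $\{G\mu=\infty\}\cap U_j=\{H_j=\infty\}\cap U_j=\bigcap_n\{H_j>n\}\cap U_j$ is $G_\delta$ in $U_j$, hence in $X$ (as $U_j$ is open). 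Finally, a subset of a metric space that is $G_\delta$ in each member of a countable open cover is itself $G_\delta$: its complement meets each $U_j$ in a set of the form $\bigcup_n(U_j\setminus V_n)$ with $V_n$ open, and $U_j\setminus V_n$ is the intersection of an open and a closed set, hence $F_\sigma$ (open subsets of a metric space are $F_\sigma$); so the complement of $\{G\mu=\infty\}$ is $F_\sigma$.

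For (i) $\Rightarrow$ (iii) — the substantial implication — write $P=\bigcap_m O_m$ with open sets $O_m\downarrow$ and set $F_m:=X\setminus O_m$. Any $\mu\in\M(P_0)$ has $\supp\mu\subset\overline{P_0}=\overline P$, so (\ref{gmu-finite}) already yields $G\mu<\infty$ off $\overline P$; thus it suffices to produce $\mu\in\M(P_0)$ with $G\mu=\infty$ on $P$ and $G\mu<\infty$ on the $F_\sigma$-set $\overline P\setminus P=\bigcup_m(\overline P\cap F_m)$. The plan is to build $\mu=\sum_m\tau_m$ as a sum of \emph{finitely} supported measures $\tau_m\in\M(P_0)$ of small mass. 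Each $\tau_m$ is chosen, using the density of $P_0$ and $c^\ast(P)=0$ through the constructions behind Theorem \ref{main-first} and Corollary \ref{p0-evans}, so that $G\tau_m\ge1$ on a ``deep'' compact piece of $P$ while its finite support $S_m\subset P_0$ lies at depth at least $\rho_m$ inside $O_m$, i.e.\ $\dist(S_m,F_m)\ge\rho_m$. Finiteness off $P$ is then controlled as follows: for $z\in\overline P\cap F_{m'}$ and $m\ge m'$ one has $z\in F_m$ and hence $\dist(z,S_m)\ge\rho_m$, so, using assumption (1) together with the symmetry estimate $G(z,x)\le C\,G(x,z)$, the value $G\tau_m(z)$ is bounded by $B_m\|\tau_m\|$ with a finite constant $B_m$ depending only on $S_m$ and $\rho_m$; choosing $\|\tau_m\|\le 2^{-m}/B_m$ makes the tail $\sum_{m\ge m'}G\tau_m(z)$ summable, and for the finitely many indices $m<m'$ the value $G\tau_m(z)$ is finite simply because $S_m$ is a finite set with $z\notin S_m$. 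Divergence on $P$ is arranged by choosing the deep pieces so that every $z\in P$ lies in infinitely many of them.

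The main obstacle is precisely this last reconciliation, and it is where the $G_\delta$ hypothesis is indispensable. Points $z\in P$ lying close to $\overline P\setminus P$ become shallow in every layer as $m$ grows, so no fixed threshold sequence $\rho_m\downarrow0$ can keep such a $z$ inside the deep pieces for all large $m$; yet letting the layers $O_m$ shrink too slowly is impossible, since keeping $P$ uniformly deep would force $\bigcap_m O_m\supset\overline P$. Equivalently, near a point of $\overline P\setminus P$ the dense set $P_0$ accumulates just as it does near a nearby point of $P$, so the weights must be made to decay toward $\overline P\setminus P$ fast enough to keep $G\mu$ finite there, yet slowly enough that $G\mu=\infty$ persists on $P$. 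The delicate step is therefore to choose the representation $O_m\downarrow P$, the depths $\rho_m$, and the masses $\|\tau_m\|$ by a diagonal procedure guaranteeing simultaneously that each $z\in P$ is caught by a deep piece infinitely often and that the bounds $\sum_m B_m\|\tau_m\|$ remain summable on every $\overline P\cap F_{m'}$. Carrying out this diagonal choice, and checking that the required finitely supported, deep, small‑mass measures $\tau_m$ with uniform off‑diagonal bounds can indeed be extracted from the arguments of Theorem \ref{main-first} and Corollary \ref{p0-evans}, is the technical heart of the proof.
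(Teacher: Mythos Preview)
Your handling of (iii)$\Rightarrow$(ii) and (ii)$\Rightarrow$(i) is correct; the localization argument you give for the $G_\delta$ conclusion is essentially the paper's argument in the general case.

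For (i)$\Rightarrow$(iii), however, you have not given a proof but a plan, and you say so yourself: the ``diagonal procedure'' meant to reconcile depth, mass, and coverage is announced as ``the technical heart of the proof'' and left undone. The obstacle you isolate is genuine, and it is not clear your scheme can surmount it. Requiring $\operatorname{dist}(S_m,F_m)\ge\rho_m$ forces the ``deep piece'' on which $G\tau_m\ge 1$ to lie in $\{z\in P:\operatorname{dist}(z,F_m)\gtrsim\rho_m\}$; for $z\in P$ close to $\overline P\setminus P$ the depths $\operatorname{dist}(z,F_m)$ may tend to~$0$ at a rate that no sequence $(\rho_m)$ fixed in advance can undercut infinitely often \emph{simultaneously for every such $z$}. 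Relabelling or repeating the $O_m$ does not obviously help, and the tools you cite (Theorem~\ref{main-first}, Corollary~\ref{p0-evans}) yield $G\mu=\infty$ on $P$ with no information at all about where $G\mu$ is finite, so they do not supply the missing control.

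The paper sidesteps the depth-versus-coverage tension altogether. It does not restrict to deep pieces; instead it produces, for every open $U\supset P$ and every $\ve>0$, a measure $\nu\in\M_\ve(P_0)$ with $G\nu>1$ on \emph{all} of $P$, $G\nu<\infty$ on $X\setminus P_0$, and $G\nu<\ve$ on $U^c$ (Lemma~\ref{lemme-4}); summing such $\nu_m$ for a decreasing sequence $U_m$ with $\bigcap_m U_m=P$ finishes the proof in one line. The key new ingredient, absent from your toolkit, is the trimming Lemma~\ref{key-finite}: given $\nu_0\in\M(V)$, one removes mass from a sequence of shells $B_n\subset V$ pushed progressively toward $\partial V$ along an exhaustion, obtaining $\nu\le\nu_0$ with $G\nu<\infty$ on $V^c$ while $G\nu$ decreases by at most a convergent geometric series on the set where $G\nu_0$ was large. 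This trimming, followed by sweeping onto $\overline P\cap V$ and discretizing onto $P_0$ (Lemmas~\ref{key-0}--\ref{lemme-4}), is precisely the idea that replaces your unexecuted diagonal step.
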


    \begin{remark}{\rm
        Let us note that J.\,Deny \cite{deny} had made a step in the direction of Choquet's result in proving that,
        for every  $G_\delta$-set $P$ in $\real^N$ which is polar (with respect to classical potential theory),
        there exists a~measure~$\mu$ on~$\real^N$ such that $\{G\mu=\infty\}=P$.
      }
      \end{remark}

    \section{Case, where  $G$ has the triangle property on $X$} \label{special}
    
 Let us  consider first the case, where $G$ has the triangle property on $X$,  and therefore
 $  C\inv  d^{-\g}\le G\le C d^{-\g}$  for some metric $d$ for $X$ and $\g,C\in (0,\infty)$.
 Defining $\tilde G:=d^{-\g}$ we then have
$   C\inv \tilde G\mu \le G\mu\le C \tilde G\mu$,
 and hence $\{\tilde G\mu=\infty\}=\{G\mu=\infty\}$ for every  $\mu\in \M(X)$.
 So the implications of Theorems \ref{main-first} and Theorem \ref{main-second} follow immediately
if they hold for $\tilde G$. Thus we may and shall assume in this section without loss of generality that
        \begin{equation*}
      G(x,y)=d(x,y)^{-\g}, \qquad x,y\in X.
    \end{equation*}
    Then, for every $\mu\in\M(X)$, the ``potential'' $G\mu$  
    is lower semicontinuous on $X$ and
    continuous outside the support of $\mu$. Moreover,
    if $A$ and $B$ are Borel sets in  $X$ such that $ d(A,B):=\inf\{d(x,y)\colon x\in A, y\in B\}>0$, then
    \begin{equation*}
      G\mu(x) \le   d(A,B)^{-\g}\|\mu\| \qquad\mbox{ for all  $x\in A$ and $\mu\in \M(B)$.}
      \end{equation*}

      The key for       Theorems \ref{main-first} and \ref{main-second} in our setting
      will be Lemmas \ref{key}, \ref{key-0}  and \ref{key-finite}.
 
       \subsection{Proof of Theorem \ref{main-first}}

       \begin{lemma}\label{key}
         Let  $\emptyset \ne A \subset X$ be closed, let $A_0\subset A$  be a Borel set which is dense in $A$,
         and let $\mu\in \M(X)$.
       \begin{itemize} 
       \item[\rm  (a)]
           If $\mu(A)=0$, there exists $\nu\in \M(A_0)$ such that
           \begin{equation}\label{eins}
             \|\nu\|=\|\mu\|  \und \mbox{$ G\nu\ge  3^{-\g} G\mu$ on $A$.}
             \end{equation}
           \item[\rm (b)]   There exists $\nu\in\M(A)$ such that {\rm(\ref{eins})} holds.
             \end{itemize} 
\end{lemma}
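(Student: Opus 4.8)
The plan is to realize $\nu$ as the pushforward of $\mu$ under a Borel map $T$ that shifts each point $y$ of the support of $\mu$ onto a nearby point $T(y)\in A_0$, where ``nearby'' is measured against the distance of $y$ to $A$. Since in this section $G=d^{-\g}$, the whole estimate reduces to controlling a ratio of distances: if every $y\notin A$ is sent to a point $T(y)\in A_0$ with $d(y,T(y))\le 2\,d(y,A)$, then for every $x\in A$ we have $d(y,A)\le d(x,y)$, whence $d(x,T(y))\le d(x,y)+d(y,T(y))\le 3\,d(x,y)$ and therefore $G(x,T(y))=d(x,T(y))^{-\g}\ge 3^{-\g}G(x,y)$. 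Pushing $\mu$ forward by such a $T$ preserves the total mass and loses at most the factor $3^{-\g}$ in the potential on $A$.

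For part (a), since $\mu(A)=0$ the map $T$ need only be controlled $\mu$-a.e., i.e.\ on $X\setminus A$. Because $X$ carries the metric $d$ and has a countable base, it is separable, so $A_0$ contains a countable subset $D=\{z_j\colon j\in\nat\}$ that is dense in $A$. For $y\notin A$ one has $d(y,A)>0$ (as $A$ is closed), and by density of $D$ in $A$ the set $\{j\colon d(y,z_j)<2\,d(y,A)\}$ is nonempty; I would set $T(y):=z_{j(y)}$ with $j(y)$ its least element, and $T(y):=z_1$ for $y\in A$. Each level set $\{T=z_k\}$ is carved out by the inequalities $d(\cdot,z_k)<2\,d(\cdot,A)$ and $d(\cdot,z_j)\ge 2\,d(\cdot,A)$ for $j<k$, all of which involve continuous functions of the point; hence $T$ is Borel and takes values in $D\subset A_0$. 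Setting $\nu:=T_*\mu$ gives a finite Radon measure with $\nu(X\setminus A_0)=\mu(T^{-1}(X\setminus A_0))=0$ and $\|\nu\|=\|\mu\|$, and the change-of-variables formula together with the distance estimate (valid for $\mu$-a.e.\ $y$, since $\mu(A)=0$) yields $G\nu(x)=\int G(x,T(y))\,d\mu(y)\ge 3^{-\g}G\mu(x)$ for every $x\in A$.

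Part (b) then follows by peeling off the part of $\mu$ already sitting on $A$. Writing $\mu=\mu|_A+\mu_1$ with $\mu_1:=\mu|_{X\setminus A}$, I would apply part (a) to $\mu_1$ with $A_0:=A$ (which is dense in itself) to get $\nu_1\in\M(A)$ with $\|\nu_1\|=\|\mu_1\|$ and $G\nu_1\ge 3^{-\g}G\mu_1$ on $A$, and then set $\nu:=\mu|_A+\nu_1$. Then $\nu\in\M(A)$, $\|\nu\|=\mu(A)+\|\mu_1\|=\|\mu\|$, and on $A$, using $3^{-\g}\le 1$, one gets $G\nu=G(\mu|_A)+G\nu_1\ge 3^{-\g}\bigl(G(\mu|_A)+G\mu_1\bigr)=3^{-\g}G\mu$.

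The geometry here is elementary and the reduction of (b) to (a) is immediate; the only genuinely delicate point is the measurability of the selection $T$, which is needed both for $\nu=T_*\mu$ to be a bona fide Radon measure and for the change-of-variables identity to apply. This is precisely what the ``least index in a countable dense subset'' device handles, trading an appeal to a general measurable-selection theorem for an explicit Borel construction.
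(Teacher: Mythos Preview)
Your argument is correct, and part (b) is handled exactly as in the paper. Part (a) rests on the same geometric inequality the paper uses---namely, if $y\notin A$ is replaced by a point $z\in A_0$ with $d(y,z)<2\,d(y,A)$, then for every $x\in A$ one has $d(x,z)<3\,d(x,y)$---but the implementations differ. The paper first slices $X\setminus A$ into dyadic ``shells'' $S(A,r)=\{3r\le d(\cdot,A)<4r\}$, reduces to a single shell, covers $A$ by balls $B(x_n,r)$ with $x_n\in A_0$, partitions the shell into pieces $A_n=S(A,r)\cap B(x_n,5r)$, decomposes $\mu=\sum\mu_n$ accordingly, and sets $\nu=\sum\|\mu_n\|\,\delta_{x_n}$. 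You bypass the shell decomposition entirely by constructing a single Borel selection $T\colon X\to D\subset A_0$ (via the least-index rule in a countable dense $D$) and taking $\nu=T_*\mu$; the condition $d(y,T(y))<2\,d(y,A)$ handles all scales at once. Your route is shorter and makes explicit that $\nu$ is a pushforward, at the cost of having to verify the measurability of $T$, which you do cleanly. The paper's route avoids any selection argument by working with measure partitions rather than pointwise maps, and its shell picture makes the constant~$3$ visually transparent (indeed the paper remarks that $3^{-\g}$ can be improved to $(2+\ve)^{-\g}$ by thickening the shells).
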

 
\begin{proof}  (a)  Assuming $\mu(A)=0$, we consider ``shells'' $S(A,r)$ around $A$ defined by
  \begin{equation*}
    S(A,r):=\{y\in X\colon 3r\le d(A,\{y\}) < 4r\}, \qquad r>0.
    \end{equation*} 
  Since  $X\setminus A$ is obviously covered by the   ``shells''
  $S(A,(4/3)^k)$, $k\in\ganz$,  we may suppose without loss of generality that
  $\mu$ is supported by some $S(A,r)$, $r>0$.

  For $x\in X$ and $R>0$, let $B(x,R):=\{y\in X\colon d(y,x)<R\}$. There is a~sequence~$(x_n)$
  in~$A_0$ such that $A$ is covered by the open balls  $B(x_n,r)$, $n\in\nat$,
  and hence $S(A,r)$   is the union of the sets
  \begin{equation*}
    A_n:=S(A,r)\cap B(x_n,5r).
    \end{equation*} 
  So there exist $\mu_n\in\M(A_n)$, $n\in\nat$, such that $\sum_{n\in\nat} \mu_n=\mu$.

     For the moment, let us fix $n\in\nat$ and  $x\in A$. If $y\in A_n$,
    then  $d(y,x_n)<5r$ and  $3r\le d(x,y)$,   hence
    $d(x,x_n)\le d(x, y)+d(y,x_n) < 3 d(x,y)$. 
      \begin{center}
       \includegraphics[width=8cm]{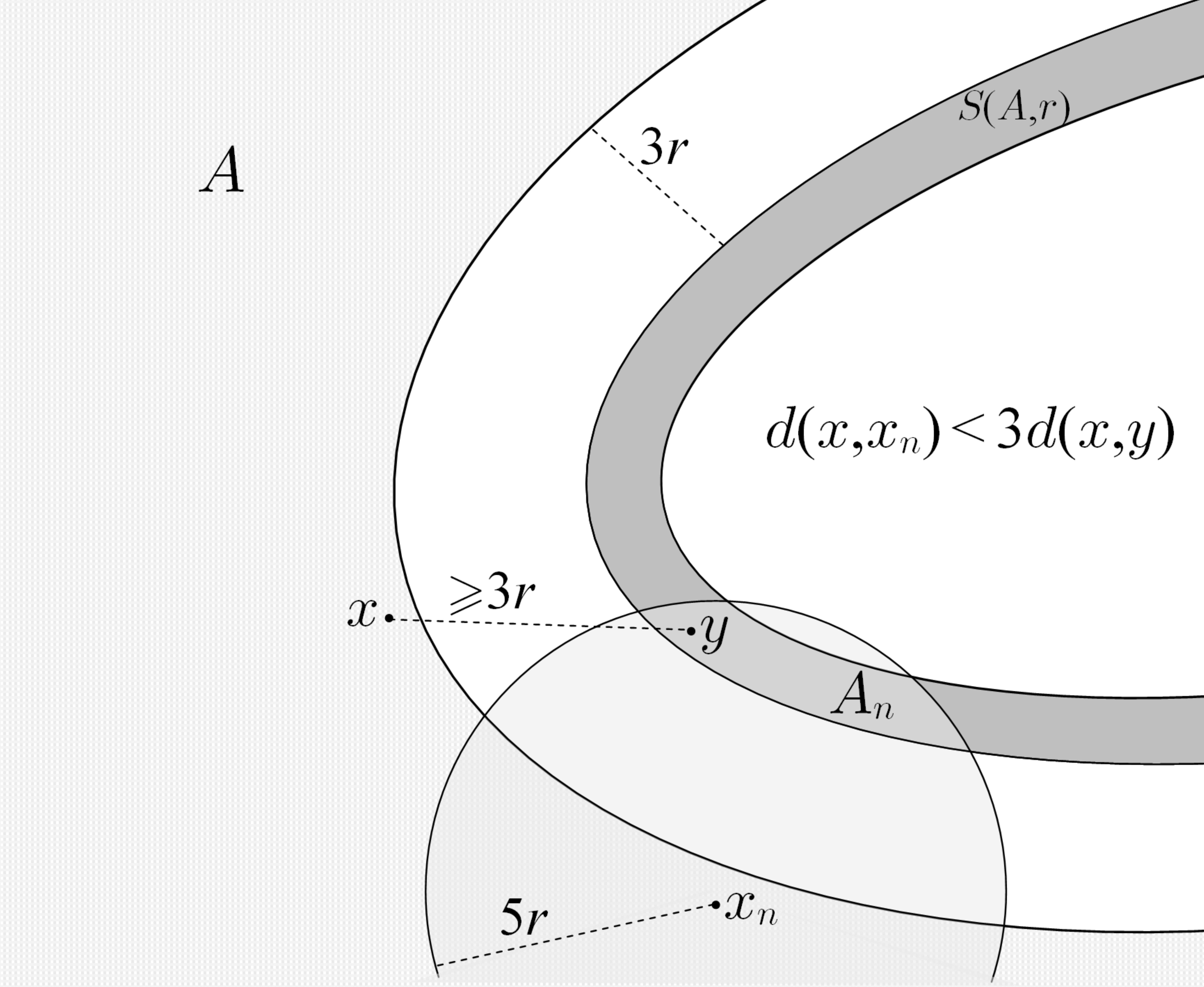}\\
 {\small Figure 1. ``Sweeping'' of $\mu_n$ from $A_n$ to $x_n$}
\end{center}
So 
$d(x,x_n)^{-\g}>3^{-\g} d(x,y)^{-\g}$.  
      Integrating with respect to $\mu_n\in \M(A_n)$ we see that     
\begin{equation*} 
      \|\mu_n\|G(x,x_n)\ge 3^{-\g}   G\mu_n(x). 
\end{equation*} 
Clearly,  $\nu:=\sum_{n\in\nat}  \|\mu_n\|\,\delta_{x_n}\in \M(A_0)$ (where $\delta_{x_n}$
denotes Dirac measure at~$x_n$) and the measure $\nu$  satisfies (\ref{eins}).

(b)    Let $\mu':=1_{A^c}\mu$ and $\mu'':=1_{A} \mu$.
     By (a), there exists $\nu'\in \M(A)$
     such that $\|\nu'\|=\|\mu'\|$ and $G\nu'\ge 3^{-\g} G\mu'$ on $A$. Since $\mu=\mu'+\mu''$, we obtain
     that  $\nu:=\nu'+\mu''\in \M(A)$ and (\ref{eins}) holds.
       \end{proof}

       \begin{remark}
         {\rm   
           It is easily seen that, for every $\ve>0$, we can get  (a) in Lemma~\ref{key}  with $(2+\ve)^{-\g}$
           in place of $3^{-\g}$ replacing $3r$ and $4r$ in the definition of $S(A,r)$ by
           $Mr$ and $(M+1)r$, $M$ sufficiently large. 
         }
\end{remark}

       \begin{proof}[Proof of Theorem \ref{main-first}]
                           Let $\mu\in \M_1(X)$,   $G\mu=\infty$ on $P$.
            By   Lemma \ref{key}, there are  $\nu_m\in \M_1(A_m)$ with
         $G\nu_m=\infty$ on $A_m$, $m\in\nat$. Then $\nu:=\sum_{m\in\nat} 2^{-m}\nu_m\in\M_1(P)$ and
         $G\nu=\infty$ on~$P$.
       \end{proof}

We might note that until now we did not use local compactness of $X$.       
       
\subsection{Proof of Corollary \ref{p0-evans}}

We shall need a lemma which follows from the weak$^\ast$-lower semicontinuity of the mappings
$\nu\mapsto G\nu(x)=\sup_{m\in\nat}(G\wedge m)\nu(x)$, $x\in X$, and the lower semicontinuity of the functions $G\nu$
(see  \cite[p.\,26]{brelot}   or \cite[Lemme 1]{choquet-gd}).

       \begin{lemma}\label{weak}
         Let $K,L$ be compacts in $X$, $L\subset K$. Let $\mu\in \M(K)$ and $\vp$ be a~continuous function
         on $L$ such that $G\mu>\vp$ on $L$. Then there exists a weak$^\ast$-neighborhood $\N$ of $\mu$
         in $\M(K)$ such that, for every $\nu\in \N$,  $G\nu>\vp$ on $L$.
       \end{lemma}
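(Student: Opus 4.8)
The plan is to package the two one-variable semicontinuity properties already noted before the statement into a single \emph{joint} lower semicontinuity assertion, and then to read off the neighbourhood by a routine compactness argument that uses the compactness of $L$ in an essential way. Since we are in the situation of this section, I take $G(x,y)=d(x,y)^{-\g}$; the decisive point is that then each truncation $G\wedge m$, $m\in\nat$, is a \emph{bounded continuous} function on $X\times X$, and $G\nu(x)=\sup_{m\in\nat}(G\wedge m)\nu(x)$ for every $\nu\in\M(K)$ and $x\in X$.

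First I would prove that, for each fixed $m$, the map $(\nu,x)\mapsto (G\wedge m)\nu(x)=\int(G\wedge m)(x,y)\,d\nu(y)$ is jointly continuous on $\M(K)\times X$, where $\M(K)$ carries the weak$^\ast$-topology. To see this I split $(G\wedge m)\nu'(x')-(G\wedge m)\nu(x)$ as the sum of $(G\wedge m)\nu'(x')-(G\wedge m)\nu'(x)$ and $(G\wedge m)\nu'(x)-(G\wedge m)\nu(x)$. The first difference is bounded by $\bigl(\sup_{y\in K}|(G\wedge m)(x',y)-(G\wedge m)(x,y)|\bigr)\,\nu'(K)$, which is small once $x'$ is close to $x$: here I use the uniform continuity of $G\wedge m$ on a compact neighbourhood of $x$ times $K$, together with the fact that on a suitable weak$^\ast$-neighbourhood of $\nu$ the masses $\nu'(K)$ stay bounded (test against the constant $1\in C(K)$). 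The second difference is small for $\nu'$ in a weak$^\ast$-neighbourhood of $\nu$, because $(G\wedge m)(x,\cdot)\in C(K)$ is a \emph{fixed} test function. Taking the supremum over $m$, the function $(\nu,x)\mapsto G\nu(x)$ is a supremum of continuous functions and hence \emph{jointly lower semicontinuous} on $\M(K)\times X$.

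With this in hand the conclusion is immediate. Since $\vp$ is continuous on $L$, the function $(\nu,x)\mapsto G\nu(x)-\vp(x)$ is lower semicontinuous on $\M(K)\times L$, so the set $W:=\{(\nu,x)\in\M(K)\times L\colon G\nu(x)>\vp(x)\}$ is open, and the hypothesis $G\mu>\vp$ on $L$ says exactly that $\{\mu\}\times L\subset W$. Now I invoke the tube lemma: for each $x\in L$ choose basic open sets $U_x\ni\mu$ in $\M(K)$ and $V_x\ni x$ in $L$ with $U_x\times V_x\subset W$; by compactness of $L$ finitely many $V_{x_1},\dots,V_{x_k}$ cover $L$, and then $\N:=\bigcap_{i=1}^{k}U_{x_i}$ is a weak$^\ast$-neighbourhood of $\mu$ with $\N\times L\subset W$, that is, $G\nu>\vp$ on $L$ for every $\nu\in\N$.

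The only genuine work is the joint continuity of the truncations in the first step; once that is established, the rest is the standard tube-lemma reduction, and it is precisely here that the compactness of $L$ and the passage from separate to \emph{joint} lower semicontinuity are used. I expect the main (minor) obstacle to be the bookkeeping in that first step, namely controlling the masses $\nu'(K)$ uniformly on the chosen neighbourhood so that the equicontinuity estimate for $G\wedge m$ can be combined with weak$^\ast$-convergence. One could instead produce $\N$ by hand, covering $L$ by finitely many pieces on which a single truncation level suffices and invoking equicontinuity of $\{(G\wedge m)(x,\cdot)\}$ in $C(K)$ via Arzel\`a--Ascoli, but the joint-lower-semicontinuity formulation above seems the cleanest route.
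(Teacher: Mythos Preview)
Your argument is correct and is essentially a fleshed-out version of the paper's own treatment: the paper does not give a proof but simply records that the lemma \emph{follows from the weak$^\ast$-lower semicontinuity of the mappings $\nu\mapsto G\nu(x)=\sup_{m}(G\wedge m)\nu(x)$ and the lower semicontinuity of the functions $G\nu$}, with references to Brelot and to Choquet's Lemme~1. Your proof of joint lower semicontinuity of $(\nu,x)\mapsto G\nu(x)$ via continuity of the truncations $(G\wedge m)\nu(x)$, followed by the tube-lemma extraction of~$\N$ from the compactness of~$L$, is precisely the standard way to turn that remark into a complete argument.
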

         
     Given a compact $K\ne \emptyset$ in $X$, a measure $\mu\in\M(K)$
       and a dense  set~$A_0$ in~$K$, we construct  \emph{approximating   measures}
        \begin{equation}\label{approx}
         \mu^{(n)}=\sum\nolimits_{1\le j\le M_n} \a_{nj} \delta_{x_{nj}}, \quad   \a_{nj}\ge 0,\ x_{nj}\in A_0,
         \quad n\in\nat,
         \end{equation} 
  in the following way: 
     Having fixed $n\in\nat$,                      
                we  take $x_1,\dots, x_M\in A_0$
         such that the balls $B_j:=B(x_j,1/n)$  cover $K$, choose  
         $\mu_j\in \M(B_j)$ with  $\mu=\sum_{1\le j\le M} \mu_j$,  and define
         $                          \mu^{(n)}:=\sum_{1\le j\le M} \|\mu_j\| \delta_{x_j}$. Of course,  $\|\mu^{(n)}\|=\|\mu\|$.

         Clearly,  the sequence  $ (\mu^{(n)})$ is weak$^{\ast}$-convergent to~$\mu$, and, for every open   
       neighborhood $W$ of $K$, the sequence $(G\mu^{(n)})$ converges to $G\mu$ uniformly on $W^c$, 
       since the functions $y\mapsto d(x,y)^{-\g}$, $x\in W^c$,  are  equicontinuous on~$K$. 
                     
       \begin{proof}[Proof of Corollary \ref{p0-evans}]
        We may assume without loss of generality that $(A_m)$ is an increasing sequence of compacts.  
        Given $m\in\nat$, there exists $\mu_m\in \M(A_m)$ such that $G\mu_m=\infty$ on $A_m$,
        by Theorem \ref{main-first}. 
        By Lemma \ref{weak} and using   approximating   discrete measures,
        we obtain
          $\nu_m\in \M_1(P_0\cap A_m)$ such that  $G\nu_{m}> 2^m$ on~$A_m$.  
           Then~$   \nu:=\sum_{m \in\nat}       2^{-m} \nu_m\in \M_1(P_0)$ and $G\nu=\infty$ on~$P$.
         \end{proof}

\subsection{Proof of Choquet's theorem}

The implication  (iii)\,$\Rightarrow$\,(ii)  holds trivially. 
   Since, for every $\mu\in \M(X)$, the function~$ G\mu$ is lower semicontinuous, and therefore
$      \{G\mu=\infty\}      =\bigcap_{n\in\nat}\{  G\mu>n\}$
is a~$G_\delta$-set, we also know that  (ii)  implies (i). 
 
Based on Lemma \ref{key}, the next two lemmas are the additional   ingredients
for  the~proof of the implication  (i)\,$\Rightarrow$\,(iii)
in our setting. They replace the potential-theoretic Lemme 3 in \cite{choquet-gd}. 
 The other steps can, more or less, be taken as in  \cite{choquet-gd}.

       A  sequence $(U_n)$ of open sets in $X$ with $\bigcup_{n\in\nat} U_n=U$ will be called
 \emph{exhaustion of~$U$} provided, for every $n\in \nat$, the closure $\ov U_n$   is a compact
 subset of $U_{n+1}$.

  \begin{lemma}\label{key-finite}     
    Let  $V\subset  X $ be   open, $\nu_0\in \M(V)$, and $M>0$.
    There  exists~\hbox{$\nu\in \M(V)$} such that  $\nu\le \nu_0$,
   $G\nu<\infty$ on $V^c$ and $G\nu>M$ on~ $\{G\nu_0>M+1\}\cap V$.        
       \end{lemma}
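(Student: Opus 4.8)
We work throughout, as elsewhere in this section, with $G(x,y)=d(x,y)^{-\g}$, and we write $\delta_V(x):=d(x,\vc)$ for the ($1$-Lipschitz) distance to the complement, so that $\delta_V>0$ precisely on $V$. I look for $\nu$ in the form $\nu=\nu_0-\lambda$ with $0\le\lambda\le\nu_0$; then $\nu\le\nu_0$ holds automatically, and wherever $G\lambda<\infty$ one has $G\nu=G\nu_0-G\lambda$. Consequently the requirement $G\nu>M$ on $\{G\nu_0>M+1\}\cap V$ splits into two parts: on the finite part $\{M+1<G\nu_0<\infty\}\cap V$ it suffices to arrange $G\lambda\le1$, while on the infinite part $\{G\nu_0=\infty\}\cap V$ it suffices to retain enough mass that $G\nu$ is still $\infty$. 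The plan is thus to discard a part $\lambda$ of $\nu_0$, concentrated near $\vc$, so that $G\lambda\le1$ on the relevant set, while the remainder $\nu$ satisfies $G\nu<\infty$ on $\vc$.

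For the finiteness on $\vc$ I first lighten the load: since $\nu\le\nu_0$, one has $G\nu\le G\nu_0<\infty$ at every $z\in\vc$ with $G\nu_0(z)<\infty$, so the only points needing attention are those of the ``bad'' set $Z:=\{G\nu_0=\infty\}\cap\vc$, which satisfies $\nu_0(Z)=0$ because $\nu_0(\vc)=0$. I would organise the discarding along the dyadic collars $T_j:=\{y\in V:2^{-j-1}\le\delta_V(y)<2^{-j}\}$, $j\in\ganz$, keeping a fraction $s_j\in(0,1]$ of $\nu_0|_{T_j}$, i.e.\ $\nu:=\sum_j s_j\,1_{T_j}\nu_0$ and $\lambda:=\sum_j(1-s_j)\,1_{T_j}\nu_0$. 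For $y\in T_j$ and any $z\in\vc$ we have $d(z,y)\ge\delta_V(y)\ge2^{-j-1}$, so the contribution of $T_j$ to $G\nu(z)$ is at most $s_j\,2^{(j+1)\g}\nu_0(T_j)$, uniformly in $z$; choosing the $s_j$ so that $\sum_j s_j\,2^{(j+1)\g}\nu_0(T_j)<\infty$ then makes $G\nu$ bounded, hence finite, on all of $\vc$.

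The delicate point, and where I expect the real work, is that the choice of $(s_j)$ forcing $G\nu<\infty$ on $\vc$ must not destroy $G\lambda\le1$ on the finite part of the target set: a crudely uniform summable choice is too aggressive, since near $\vc$ it can remove a fixed fraction of the potential of an interior point and thereby violate $G\lambda\le1$. The resolution must exploit that finiteness is only needed pointwise on the \emph{specific} set $Z$ (for each fixed $z$, only the mass genuinely close to $z$, necessarily in the high-index collars, drives the divergence) together with the collar separation $d(x,y)\ge|\delta_V(x)-\delta_V(y)|$, which confines the effect of thinning a far collar $T_j$ ($j\ge i+2$) on a point $x\in T_i$ to a factor $2^{(i+2)\g}$ against the tail mass $\nu_0(\{\delta_V<2^{-i-1}\})$, noting that $\sum_j\nu_0(T_j)=\|\nu_0\|<\infty$. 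Calibrating $(s_j)$ so that this discarded potential stays $\le1$ at every interior point while still summing finitely along each $z\in Z$, and simultaneously leaving $G\nu=\infty$ wherever $G\nu_0=\infty$, is the crux; handled with an exhaustion of $V$ to control the behaviour at infinity, it is exactly the metric substitute for the potential-theoretic Lemme 3 of \cite{choquet-gd}.
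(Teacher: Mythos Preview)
Your proposal is a sketch that correctly identifies the central tension---thin $\nu_0$ enough near $\vc$ to make $G\nu$ finite there, but not so much as to lose $G\nu>M$ on $\{G\nu_0>M+1\}\cap V$---but it does not resolve it, and the collar-scaling mechanism you set up has a genuine obstruction. With a single factor $s_j\in(0,1]$ per collar $T_j$, the summability $\sum_j s_j\,2^{(j+1)\g}\nu_0(T_j)<\infty$ forces $s_j\to 0$ for large~$j$, hence $1-s_j\to 1$; but a point $x\in T_i\cap\{M+1<G\nu_0<\infty\}$ with $i$ large can have essentially all of its potential $G\nu_0(x)$ coming from the nearby collars $T_{i-1}\cup T_i\cup T_{i+1}$, and then $G\lambda(x)\approx (1-s_i)\,G\nu_0(x)\approx G\nu_0(x)>M+1\gg 1$. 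Your own bound $d(x,y)\ge|\delta_V(x)-\delta_V(y)|$ only separates $x$ from \emph{far} collars, not from its own, so there is no way to choose the single sequence $(s_j)$ serving all such $x$ simultaneously. You also insist on $G\nu=\infty$ at interior points where $G\nu_0=\infty$, which is stronger than the required $G\nu>M$ and need not survive the thinning (though with $s_j>0$ this particular point happens to be salvageable).

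The paper's proof avoids this by abandoning radial scaling altogether. It exhausts \emph{both} $V$ and $W:=\{G\nu_0>M+1\}$ by $(V_n)$ and $(W_n)$, and at step $n$ removes the mass of $\nu_{n-1}$ completely from a set $B_n:=W_n\cap(V\setminus V_m)$ with $m$ chosen large enough that (i)~$\nu_0(B_n)$ is so small that the loss $G(\nu_{n-1}-\nu_n)<2^{-n}$ on $V_n$ (and on $W_{n+1}^c$), and (ii)~the remaining $G\nu_n>M+2^{-n}$ on $\ov W_{n+1}$. Two features make this work where yours stalls: the removed set is confined to $W_n$, so on $W\setminus W_{n+1}$ the potential drops by at most $2^{-n}$ and the inductive margin $M+2^{-(n-1)}$ survives; and the removal is $0$--$1$ rather than fractional, so after step $n$ the measure $\nu_n$ has no mass near $W_n\cap\partial V$, which is exactly what yields $G\nu<\infty$ pointwise on $W\cap\partial V$ (on $\vc\setminus W$ one already has $G\nu\le G\nu_0\le M+1$). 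In short, the second exhaustion of $W$ and the targeted, complete excision of $B_n$ are the missing ideas.
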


 \begin{proof} 
   Let us choose exhaustions $(V_n)$  and  $(W_n)$ of  $V$ and $W:=\{G\nu_0>M+1\}$, respectively.
   We~claim that there          are measures $\nu_0\ge \nu_1\ge \nu_2 \ge \dots$ such that 
         \begin{equation}\label{ind}
           G\nu_n>M+2^{-n} \mbox{ on }W , \qquad   G( \nu_{n-1}-\nu_n)<2^{-n} \mbox{ on  }     V_n.   
         \end{equation} 
           Let us fix $n\in \nat$ and suppose that we have
          $\nu_{n-1}\le \nu_0 $ such that $G\nu_{n-1}>M+2^{-(n-1)}$ on $W$ (which holds if $n=1$).
          Since  $V_m\uparrow V$ and therefore $G(1_{V_m}\nu_{n-1})\uparrow G\nu_{n-1}$ as~$m\to \infty$,
          there exists   $m>n$  such that   
           \begin{equation*} 
             B_n:= W_n\cap (V\setminus V_m)\subset W_n \setminus V_{n+1} \und \nu_n:=1_{V\setminus B_n}\nu_{n-1}
           \end{equation*}
          (see Figure 2) satisfy 
            \begin{equation}\label{crucial}
              \mbox{$\nu_0(B_n)< 2^{-n} d(W_n\setminus V_{n+1},  W_{n+1}^c\cup  V_n) ^{\g} $}              
              \mbox{  \ and \ }
              \mbox{$   G\nu_n>M+2^{-n} $ on $\ov W_{n+1}$.} 
            \end{equation}
  \begin{center}
       \includegraphics[width=8.8cm]{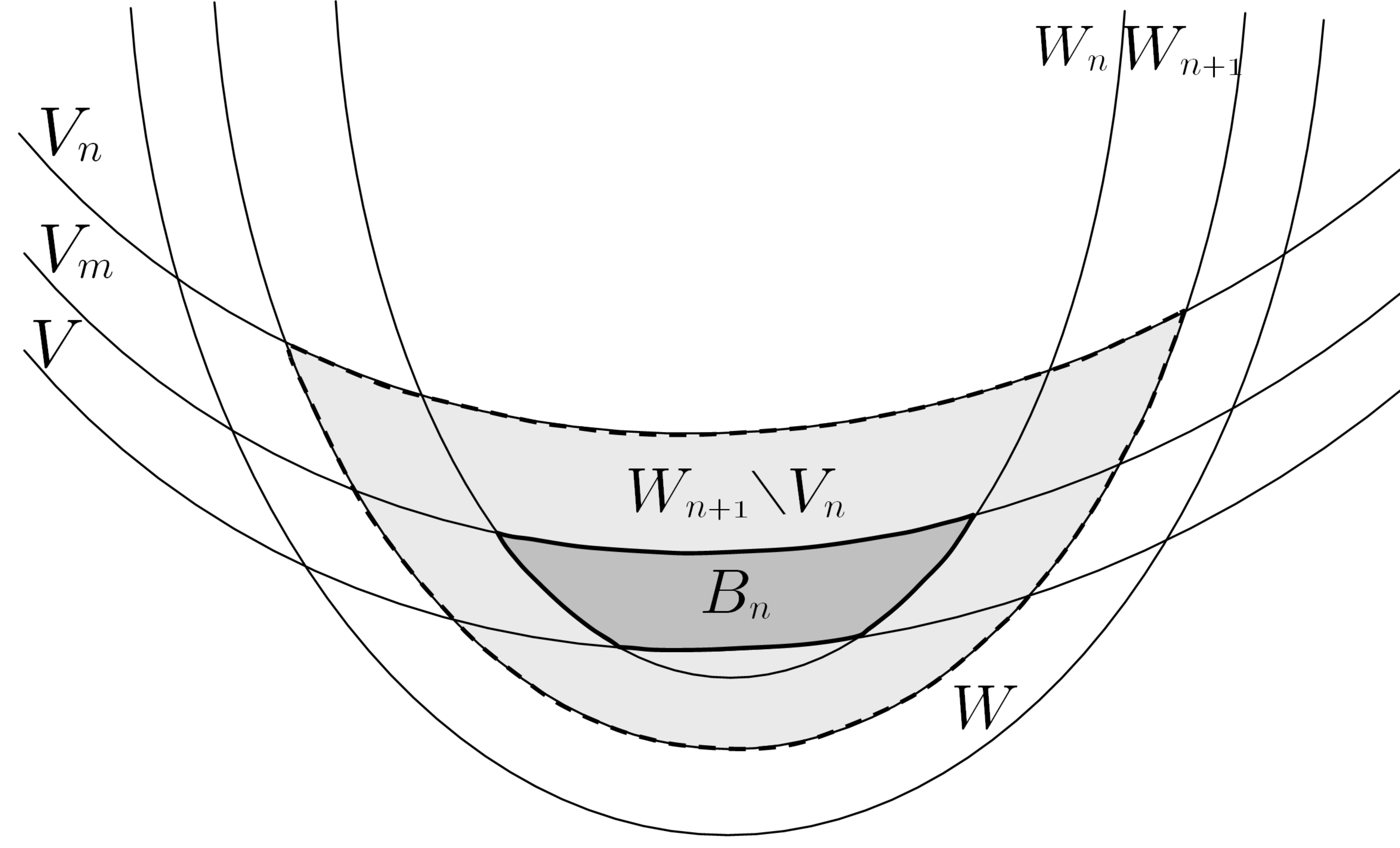}\\
       {\small Figure 2. The choice of $B_n$}
       \end{center}
         Then  $  G(\nu_{n-1}-\nu_n)\le G(1_{B_n}\nu_0)< 2^{-n}$ on $W_{n+1}^c\cup V_n$. In particular, 
         \begin{equation*}
           G\nu_n\ge  G\nu_{n-1}  -2^{-n}>M+2^{-(n-1)}-2^{-n}= M+2^{-n} \mbox{ on }W\setminus W_{n+1}.
         \end{equation*}
         Having the second inequality in~(\ref{crucial}) we conclude that~(\ref{ind}) holds.
          
          The sequence $(\nu_n)$ is decreasing to a measure $\nu$ such that, for every $n\in\nat$,
          \begin{equation*}
            G(\nu_n-\nu)=\sum\nolimits_{j=n}^\infty G(\nu_j-\nu_{j+1})
            <\sum\nolimits_{j=n}^\infty  2^{-(j+1)} = 2^{-n} \mbox{ on } V_n, 
          \end{equation*}
                    and hence $G\nu>M$ on $W\cap V_n$, by (\ref{ind}). 
                    So $ G\nu>M$ on $W\cap V$.

                    Of course, $G\nu<\infty$ on $\ov V^c\cup W^c$.  
                    By our construction, for every $n\in\nat$, the support of $\nu_n$
                    does not intersect $W_n\cap \partial V$, and hence $G\nu\le G\nu_n<\infty$ on $W_n\cap \partial V$.
                    Therefore $G\nu<\infty$ on $W\cap \partial V$, and we finally obtain that $G\nu<\infty$ on $V^c$.
            \end{proof}

         \begin{lemma}\label{key-0}
           Let $U$ be a relatively compact open set in $X$,  let $ P$ be a subset of $U$ with  $c^\ast(P)=0$, and
           let $\ve>0$.
           Then there  exist an open neighborhood $V$ of~$P$ in~$U$ and~$\mu\in\M_\ve(\ov P\cap V)$ such that
          $G\mu<\infty$ on $V^c$ and  $G\mu>2$ on $\ov P\cap V$. 
         \end{lemma}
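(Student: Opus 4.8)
The plan is to start from the assumption $c^\ast(P)=0$, which by the discussion following the definition of $c^\ast$ means there exists $\mu_0\in\M(X)$ with $G\mu_0=\infty$ on $P$. Rescaling, I may arrange $\|\mu_0\|$ to be as small as I like; moreover, since $G\mu_0=\infty$ on $P\subset U$ and $U$ is relatively compact, I want to localize the mass of $\mu_0$ near $\ov U$. First I would replace $\mu_0$ by its restriction to a small neighborhood of $\ov P$: because $U$ is relatively compact, $d(\ov P,\cdot)$ is controlled, and the contribution to $G\mu_0$ on $\ov P$ coming from mass far from $\ov P$ is bounded (using the estimate $G\mu(x)\le d(A,B)^{-\g}\|\mu\|$ recorded at the start of Section \ref{special}). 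Discarding that far-away mass still leaves $G\mu_0=\infty$ on $P$, and now $\mu_0$ is supported in a fixed compact neighborhood of $\ov P$. By the rescaling remark I may assume $\|\mu_0\|\le\ve$.

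Next I would pass to a genuinely \emph{finite} potential on the complement of a neighborhood of $P$ by invoking Lemma \ref{key-finite}. The idea is to choose the open set $V$ to be a neighborhood of $P$ inside $U$ and apply Lemma \ref{key-finite} with $\nu_0:=\mu_0$ and $M:=2$. That lemma produces $\nu\le\mu_0$ with $G\nu<\infty$ on $V^c$ and $G\nu>2$ on $\{G\mu_0>3\}\cap V$. Since $G\mu_0=\infty$ on $P$, certainly $P\subset\{G\mu_0>3\}$, so $G\nu>2$ holds on all of $P\cap V=P$. Setting $\mu:=\nu$ gives $\|\mu\|\le\|\mu_0\|\le\ve$, $G\mu<\infty$ on $V^c$, and $G\mu>2$ on $P$. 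The only gap is that Lemma \ref{key-0} demands $G\mu>2$ on the closure $\ov P\cap V$, not merely on $P$, and that $\mu$ be supported in $\ov P\cap V$ rather than in an arbitrary neighborhood.

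The support requirement I would handle by an application of Lemma \ref{key}(b): given the measure $\mu$ just produced, sweep it onto the closed set $\ov P\cap\ov V$ (or onto $\ov P$ intersected with a slightly larger open set), obtaining $\nu'\in\M(\ov P\cap V)$ with $\|\nu'\|=\|\mu\|\le\ve$ and $G\nu'\ge3^{-\g}G\mu$; one then has to re-run the argument with a suitably enlarged constant ($M$ chosen as $2\cdot3^{\g}+1$ in place of $2$) so that after the loss of the factor $3^{-\g}$ the bound $G\nu'>2$ survives. The upgrade from $P$ to its closure $\ov P\cap V$ is the step I expect to be the main obstacle: the strict inequality $G\mu>2$ is only asserted on $P$, and $G\mu$ need not be continuous, so I cannot simply take closures. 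The clean way around this is to run Lemma \ref{key-finite} with the \emph{closed} target set in mind — that is, to note that $\{G\mu_0>3\}$ is open by lower semicontinuity and contains $P$, hence contains a neighborhood of each point of $P$, but to control $\ov P$ one should instead arrange from the outset that $G\mu_0>3$ on an open set $W\supset\ov P\cap V$. Since $\ov P\cap U$ is compact and covered by the open set $\{G\mu_0=\infty\}\supset P$ only on $P$ itself, I would first thicken $\mu_0$ slightly so that $G\mu_0=\infty$ on a full neighborhood of $\ov P\cap U$ (again by sweeping, or by superposing a small measure), after which the open set $\{G\mu_0>3\}$ genuinely contains $\ov P\cap V$ and the conclusion $G\mu>2$ on $\ov P\cap V$ follows exactly as above. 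The rest is bookkeeping on the constants $\ve$ and $M$.
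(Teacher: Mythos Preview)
Your identification of the two obstacles --- getting the support into $\ov P\cap V$ and upgrading the inequality from $P$ to $\ov P\cap V$ --- is correct, and so is your instinct that the second one is the crux. But your proposed remedy for it does not work. You suggest ``thickening $\mu_0$ slightly so that $G\mu_0=\infty$ on a full neighborhood of $\ov P\cap U$''; this is impossible in general, since any nonempty open set has positive $c^\ast$ (it contains a ball, and no finite measure has infinite potential on an entire ball). Sweeping via Lemma~\ref{key} does not help either: it only guarantees $G\nu\ge 3^{-\gamma}G\mu$ on the target set $A$, so if $G\mu=\infty$ merely on $P$, sweeping onto $\ov P$ still yields $G\nu=\infty$ only on $P$, not on $\ov P$.

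The paper's way around the closure obstacle is different and, once seen, simpler: instead of enlarging the set where the potential is infinite, one \emph{defines} $V$ as a super-level set, $V:=\{x\in U : G\nu_0(x)>9^{\gamma+1}+1\}$. By lower semicontinuity $V$ is open and contains $P$, and the tautology $\ov P\cap V\subset V$ means the level-set hypothesis of Lemma~\ref{key-finite} is met on all of $V$, not merely on $P$. This dissolves the closure issue. The price is that Lemma~\ref{key-finite} must first be applied with ambient open set $U$ (where $\nu_0$ is supported), yielding finiteness only on $U^c$, not on $V^c$. The paper then pushes the resulting measure $\nu$ from $U$ into $V$ --- splitting $\nu=1_V\nu+1_{U\setminus\ov V}\nu+1_{U\cap\partial V}\nu$, sweeping the middle piece via Lemma~\ref{key}(a), and treating the boundary piece $\sigma$ through Lusin's theorem, the Evans--Vasilesco continuity principle, and weak$^\ast$-approximation (Lemma~\ref{weak}). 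Only after this does one sweep onto $\ov P\cap V$ via Lemma~\ref{key}(b), and finally reapply Lemma~\ref{key-finite}, now with ambient set $V$, to recover $G\mu<\infty$ on $V^c$. Your sketch anticipates neither the boundary decomposition nor the second pass through Lemma~\ref{key-finite}, and without the super-level-set choice of $V$ the argument cannot close.
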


     \begin{proof}  Let $\nu_0\in \M_\ve(X)$ with  $G\nu_0=\infty$ on $P$. Since $G(1_{U^c}\nu_0)<\infty$
         on $U$, by (\ref{gmu-finite}), we may assume that $\nu_0$ is supported by $U$. Of course,
         \begin{equation*}
        P\subset V:=\{x\in U\colon G\nu_0(x)>9^{\g+1}+1\}\subset U. 
      \end{equation*}
      By Lemma \ref{key-finite}, there exists $\nu\in\M_\ve(U)$ with $G\nu<\infty$ on $U^c$ and $G\nu>9^{\g+1}$ on $V$.
         Let $\nu_1:=1_V\nu$, $\nu_2:=1_{U\setminus \ov V}\nu$ and $\sigma:=1_{U\cap \partial V}\nu$
         so that $\nu=\nu_1+\nu_2+\sigma$.
    \begin{center}
       \includegraphics[width=6.7cm]{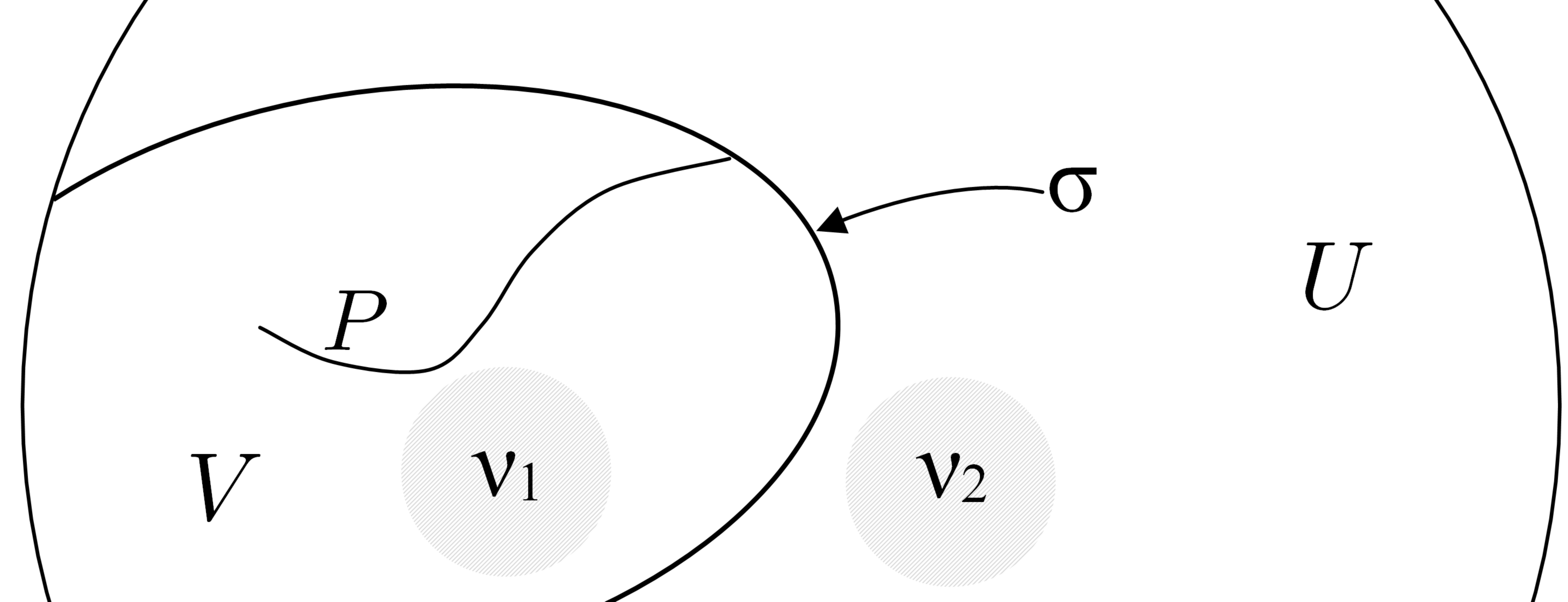}\\
 {\small Figure 3. Decomposition of $\nu$}
\end{center}

         By  Lemma \ref{key},(a), applied to $U$, $A:=\ov V\cap U$, $A_0:=V$, there exists
         $\tilde \nu_2\in \M(V)$ such that 
         \begin{equation*}
           \|\tilde \nu_2\|=\|\nu_2\| \und G\tilde \nu_2\ge 3^{-\g} G\nu_2\mbox{ on }\ov V\cap U.
         \end{equation*}
         Of course, $G\sigma<\infty$  outside the boundary $\partial V$ supporting $\sigma$. 
        Further,
         $G\sigma\le G\nu<\infty$ on $\uc$ and, by definition of $V$, on $U\setminus V$.  So~$G\sigma<\infty$ on $X$.
                  By Lusin's theorem and a~version of the continuity principle  of Evans-Vasilesco (see \cite[pp.\,97--98]{HN-(H)}),
         there are $\sigma_n\in \M(X)$ such that  each $G\sigma_n$  is continuous on $X$ and 
         $\sigma=\sum_{n\in\nat} \sigma_n$.

                  Using (\ref{approx})  and Lemma \ref{weak} with $K=L=\ov V$, we get  $\tilde\sigma_n\in\M(V)$ with  
         \begin{equation*}
         \|\tilde \sigma_n\|=  \|  \sigma_n\|   \und  G\tilde\sigma_n>G\sigma_n-2^{-n} \mbox{ on }\ov V.
           \end{equation*} 
        We define $ \tilde \sigma=\sum\nolimits_{n\in\nat} \tilde \sigma_n $ and
                     $ \tilde \nu:=\nu_1+\tilde \nu_2+\tilde \sigma$.
                     Then
                     \begin{equation*}
                       \tilde \nu\in \M_\ve(V) \und
                       G\tilde \nu\ge 3^{-\g}G\nu-1>3^{\g+2}-1>3^{\g+1}\mbox{ on }V.
                     \end{equation*}
                     
        Applying   Lemma \ref{key},(b)  to $V$, we get $\mu_0\in \M(\ov P\cap V)$ with 
           $  \|\mu_0\|=\|\tilde \nu\|\le \ve $ and $G\mu_0\ge 3^{-\g}G\tilde \nu>3$ on $\ov P\cap V$. 
           Finally, by Lemma \ref{key-finite}, we obtain a measure $\mu\le \mu_0$ such that $G\mu< \infty$ on $\vc$
           and $G\mu>2$ on $\ov P\cap V$. 
         \end{proof}

            \begin{lemma}\label{lemme-2}
            Let $ P\subset X$ such that   $c^\ast(P)=0$,  $U$ be an open neighborhood of~$P$ and~$0<\ve\le 1$. 
                               There are an open neighborhood $V$ of $P$ in $U$ and  $\mu\in \M_\ve (\ov P\cap V)$
                          such that~$G\mu>2$ on $\ov P\cap V$, $G\mu<\infty$ on $V^c$, and $G\mu<\ve$ on $U^c$.
                 \end{lemma}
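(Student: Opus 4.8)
The plan is to pass from the relatively compact situation of Lemma \ref{key-0} to a general open neighborhood $U$ by decomposing along an exhaustion of $U$, exploiting the decay of $G=d^{-\g}$ to produce the two features that Lemma \ref{key-0} lacks: that $U$ need not be relatively compact, and the quantitative bound $G\mu<\ve$ on $\uc$.

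First I would fix an exhaustion $(U_n)_{n\ge 0}$ of $U$ with $U_0=\emptyset$, so that each $\ov U_n$ is compact with $\ov U_n\subset U_{n+1}$. I set the compact annuli $K_n:=\ov U_n\sms U_{n-1}$, which cover $U$ and form a locally finite family in $U$, together with the relatively compact open thickenings $O_n:=U_{n+1}\sms\ov U_{n-2}$, for which $K_n\subset O_n\subset U$. Writing $P_n:=P\cap K_n$ gives $P=\bigcup_n P_n$, $\ov{P_n}\subset K_n\subset O_n$ and $c^\ast(P_n)\le c^\ast(P)=0$. To each pair $P_n\subset O_n$ I would apply Lemma \ref{key-0} with mass bound $\ve_n:=2^{-n-1}\ve\,(1\wedge\d_n^{\g})$, where $\d_n:=d(\ov U_{n+1},\uc)>0$ (a compact set inside the open set $U$). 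This produces open sets $V_n$ with $P_n\subset V_n\subset O_n$ and measures $\mu_n\in\M_{\ve_n}(\ov{P_n}\cap V_n)$ satisfying $G\mu_n>2$ on $\ov{P_n}\cap V_n$ and $G\mu_n<\infty$ on $V_n^c$.

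The decisive idea is to set $\mu:=\sum_n\mu_n$ but to define $V$ \emph{not} as $\bigcup_n V_n$, rather as the union of the open superlevel sets
\[
  V:=\bigcup\nolimits_n\{G\mu_n>2\},
\]
which is open because each $G\mu_n$ is lower semicontinuous. Then $G\mu>2$ on all of $V$ is automatic, since $G\mu\ge G\mu_m>2$ on $\{G\mu_m>2\}$, and $P\subset V$ because $P_n\subset\ov{P_n}\cap V_n\subset\{G\mu_n>2\}$. The decay estimate $G\mu_n(x)\le d(\supp\mu_n,\uc)^{-\g}\|\mu_n\|\le\d_n^{-\g}\ve_n\le 2^{-n-1}\ve$ for $x\in\uc$ serves twice: summed over $n$ it yields $G\mu<\ve$ on $\uc$, and since $2^{-n-1}\ve<2$ it forces $\{G\mu_n>2\}\cap\uc=\emptyset$, so $V\subset U$ and $V$ is an open neighborhood of $P$ in $U$. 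Moreover $\mu\in\M_\ve(\ov P\cap V)$, because $\|\mu\|\le\sum_n\ve_n\le\ve/2$ and each $\mu_n$ is carried by $\ov{P_n}\cap V_n\subset\ov P\cap V$.

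The main obstacle is the remaining assertion $G\mu<\infty$ on $\vc$, since the individual bounds $G\mu_n\le 2$ on $\vc$ are not summable as they stand. Here I would invoke local finiteness of the exhaustion: for a fixed $x\in U\cap\vc$, say $x\in U_j$, every $\supp\mu_n$ with $n\ge j+2$ lies in $X\sms\ov U_{n-2}\subset X\sms\ov U_j$ and hence at distance at least $d(x,X\sms U_j)>0$ from $x$, so $G\mu_n(x)\le d(x,X\sms U_j)^{-\g}\ve_n$ is summable, while the finitely many remaining terms are each at most $2$; for $x\in\uc\subset\vc$ finiteness is already contained in the $\uc$-bound. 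This gives $G\mu<\infty$ on $\vc$ and completes the construction.
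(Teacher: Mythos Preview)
Your argument is correct and follows essentially the same strategy as the paper: exhaust $U$, cut $P$ into pieces living in overlapping relatively compact open annuli, apply Lemma~\ref{key-0} to each piece with small mass, and sum. The only notable technical difference is that you take $V=\bigcup_n\{G\mu_n>2\}$ rather than $V=\bigcup_n V_n$ and choose $\ve_n$ to control only the distance to $U^c$ (rather than also inter-annulus distances as the paper does); your choice of $V$ then gives the uniform bound $G\mu_n\le 2$ on $V^c$ for free, which together with local finiteness yields $G\mu<\infty$ on $V^c$ just as cleanly.
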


                     \begin{proof}  Let $(W_n)$ be an exhaustion of $U$. Let $n\in\nat$,
                       \begin{equation*} U_n:=W_{n+1}\setminus \ov W_{n-1}, \quad
                         P_n:=P\cap U_n, \quad  \ve_n:=  2^{-n} \ve  \, (1\wedge d(U_n, W_{n-2}\cup W_{n+2}^c)^\g )
                        \end{equation*}
                        (with $W_{-1}=W_0=\emptyset$).
                        By Lemma \ref{key-0}, there exist an open neighborhood~$V_n$ of~$P_n$ in~$U_n$
                        and~$\mu_n\in\M_{\ve_n} (\ov P\cap V_n)$ such that
                      $G\mu_n>2$ on $\ov P\cap V_n$ and $G\mu_n<\infty$ on~$V_n^c$.
               \begin{center}
       \includegraphics[width=8.6cm]{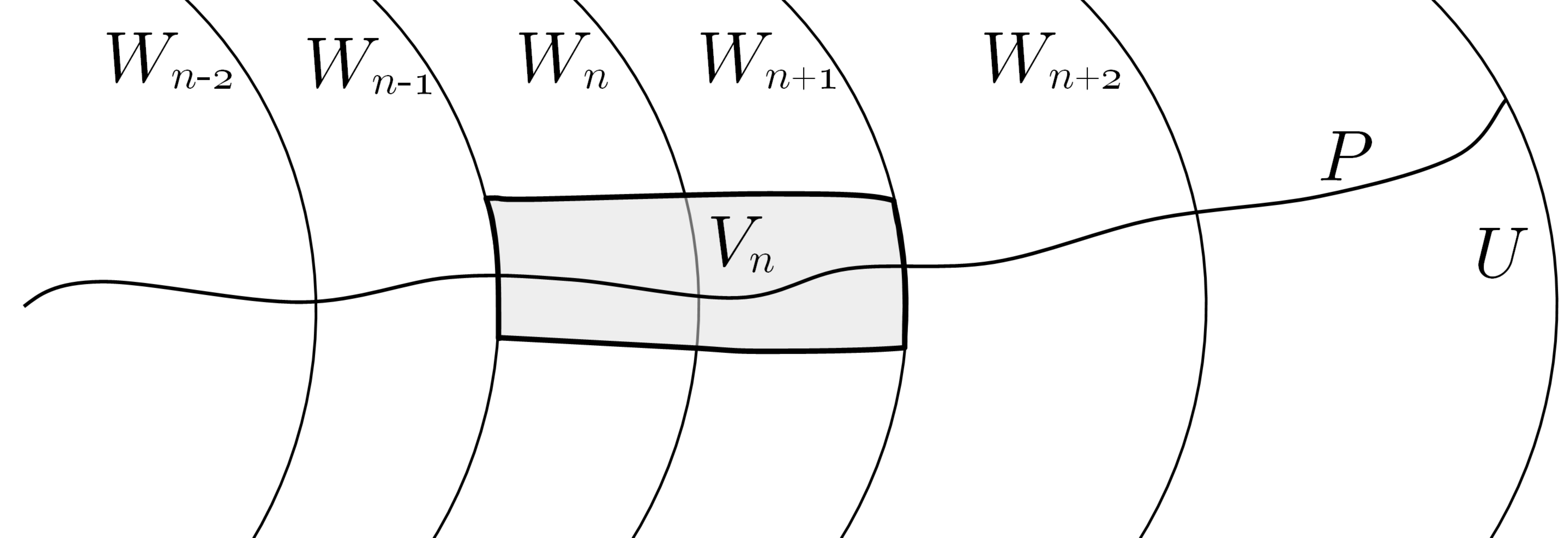}\\
   {\small Figure 4. Choice of $V_n$}
\end{center}
 By our choice of $\ve_n$, 
                      $G\mu_n\le 2^{-n}\ve $ on $W_{n-2}\cup W_{n+2}^c$.
                        It is  immediately verified that~$\mu:=\sum_{n\in\nat} \mu_n$ and $V:=\bigcup_{n\in\nat} V_n$
                        have the desired properties.
                        \end{proof}

                        We may now continue similarly as in \cite{choquet-gd}, 
                         but in a~slightly simpler way using approximating sequences instead of weak$^\ast$-neighborhoods.
                                                
                      \begin{lemma}\label{lemme-4}   
                        Let  $ P\subset X$ with $c^\ast(P)=0$ and   $P_0$   be a countable,  dense set in $P$.
                        Let $U$ be an open neighborhood of $P$ and  $\ve>0$.
                                            There exists $\nu\in \M_\ve(P_0)$ such that
                        \begin{equation*}
                        \mbox{  $G\nu >1 $ on $P$},\quad 
                        \mbox{$G\nu<\infty$       on $X\setminus P_0$}  \und   \mbox{$G\nu< \ve$ on $U^c$}.
                        \end{equation*} 
                      \end{lemma}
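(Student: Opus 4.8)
The plan is to follow the exhaustion argument already used for Lemma~\ref{lemme-2}, but to replace each of its annular pieces by a \emph{finite} discrete measure carried by $P_0$. I may assume $0<\ve\le 1$, since replacing $\ve$ by $\ve\wedge 1$ only strengthens all three conclusions. Exactly as in the proof of Lemma~\ref{lemme-2} I fix an exhaustion $(W_n)$ of $U$, set $U_n:=W_{n+1}\setminus\ov W_{n-1}$, $P_n:=P\cap U_n$, and
\begin{equation*}
\ve_n:=2^{-n-2}\ve\,\bigl(1\wedge d(U_n,\,W_{n-2}\cup W_{n+2}^{\,c})^{\g}\bigr).
\end{equation*}
That proof, through Lemma~\ref{key-0}, produces for each $n$ an open neighbourhood $V_n$ of $P_n$ in $U_n$ and a measure $\mu_n\in\M_{\ve_n}(\ov P\cap V_n)$ with $G\mu_n>2$ on $\ov P\cap V_n$. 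Since $P_0$ is dense in $P$, hence in $\ov P$, I will discretize each $\mu_n$ onto $P_0$ and put $\nu:=\sum_n\nu_n$.

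For the discretization I would, for each $n$, use the approximating discrete measures of~\eqref{approx}: cover the relevant (compact) part of $\ov P\cap V_n$ by finitely many balls $B(x_j,\rho_n)$ with centres $x_j\in P_0$, split $\mu_n=\sum_j\mu_{n,j}$ with $\mu_{n,j}\in\M(B(x_j,\rho_n))$, and set $\nu_n:=\sum_j\|\mu_{n,j}\|\,\delta_{x_j}$, a finite sum of point masses on $P_0$ with $\|\nu_n\|=\|\mu_n\|\le\ve_n$. Because $\nu_n\to\mu_n$ weak$^\ast$ as $\rho_n\to0$, Lemma~\ref{weak} applied with the constant minorant $\vp\equiv 3/2<2<G\mu_n$ shows that, for $\rho_n$ small enough, $G\nu_n>1$ on a compact set $L_n\subseteq\ov P\cap V_n$; arranging these compacta (via the overlapping annuli, exactly as in Lemma~\ref{lemme-2}) so that $\bigcup_nL_n\supseteq P$ then yields $G\nu>1$ on $P$. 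Shrinking $\rho_n$ further, the atoms of $\nu_n$ lie within $\rho_n$ of $U_n$, hence stay at positive distance from $W_{n-2}\cup W_{n+2}^{\,c}$, so, after shrinking $\rho_n$ to absorb the factor coming from moving the mass by at most $\rho_n$, the weight $\ve_n$ forces $G\nu_n\le2^{-n}\ve$ there; summing over $n$ gives $\nu\in\M_\ve(P_0)$ and $G\nu<\ve$ on $U^c$, precisely as in Lemma~\ref{lemme-2}.

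The hard part, and the genuinely new point compared with Lemma~\ref{lemme-2}, is the assertion $G\nu<\infty$ on $X\setminus P_0$, and this is exactly where carrying the mass on the \emph{finitely many} atoms of each $\nu_n$ is decisive. One cannot expect to dominate $\sum_nG\mu_n$, since $G\mu_n$ may be $+\infty$ at points of $P\setminus P_0$; but a finite discrete $\nu_n$ has $G\nu_n<\infty$ off its finite support, which lies in $P_0$, so the infinities of $G\mu_n$ are broken up by the discretization while Lemma~\ref{weak} still keeps $G\nu_n$ above $1$. Concretely, for $x\in U\setminus P_0$ pick $n_0$ with $x\in W_{n_0}$: the finitely many ``near'' terms $G\nu_1(x),\dots,G\nu_{n_0+1}(x)$ are each finite because $x\notin P_0$, while for $n\ge n_0+2$ the support of $\nu_n$ lies in $X\setminus W_{n_0+1}$, hence at distance $\ge\delta:=d(\ov W_{n_0},X\setminus W_{n_0+1})>0$ from $x$, so $\sum_{n\ge n_0+2}G\nu_n(x)\le\delta^{-\g}\sum_n\|\nu_n\|\le\delta^{-\g}\ve<\infty$. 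Together with $G\nu<\ve$ on $U^c$ and $P_0\subseteq U$, this gives $G\nu<\infty$ on all of $X\setminus P_0$. The only routine care left is the uniformity of the lower bound over the $L_n$ and the standard overlapping-annulus bookkeeping, both inherited from the proof of Lemma~\ref{lemme-2}.
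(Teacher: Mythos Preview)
Your approach is essentially sound but genuinely different from the paper's, and one step you label ``routine'' is not quite what you claim it is.

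The paper does \emph{not} re-run the annular construction of Lemma~\ref{lemme-2}. It applies Lemma~\ref{lemme-2} as a black box to obtain a single $\mu\in\M_\delta(\ov P\cap V)$ with $G\mu>2$ on $\ov P\cap V$, $G\mu<\infty$ on $V^c$ and $G\mu<\delta$ on $U^c$; only then does it exhaust $V$, decompose $\mu=\sum_k\mu_k$ along that exhaustion, and discretize each $\mu_k$ by approximating measures. Since no individual $\mu_k$ satisfies $G\mu_k>2$ anywhere, the lower bound on $\ov P_k$ must be recovered from the three neighbouring pieces $\mu_{k-1},\mu_k,\mu_{k+1}$ together (whence the $\tau_k$ and the choice $n_k:=m_{k-1}\vee m_k\vee m_{k+1}$). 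Your route---discretize each annular $\mu_n$ from Lemma~\ref{key-0} separately---has the advantage that $G\mu_n>2$ holds on all of $\ov P\cap V_n$ from the start, so no neighbouring-term coupling is needed. Your argument for $G\nu<\infty$ on $X\setminus P_0$, exploiting the finite support of each $\nu_n$ and the local finiteness of the annuli, is the correct replacement for the paper's control via $G\mu<\infty$ on $V^c$.

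The step that is \emph{not} ``exactly as in Lemma~\ref{lemme-2}'' is the choice of the compacta $L_n$. Lemma~\ref{lemme-2} never introduces such sets: there the conclusion $G\mu>2$ on $\ov P\cap V$ is obtained pointwise, since every $x\in\ov P\cap V$ lies in some open $V_n$. In your argument Lemma~\ref{weak} forces $L_n$ to be compact with $L_n\subseteq\ov P\cap V_n$ (so that $G\mu_n>2$ on $L_n$), and you need $\bigcup_nL_n\supseteq P$. The natural annular compacts $\ov P\cap(\ov W_n\setminus W_{n-1})$ lie in $V_{n-1}\cup V_n$, not in a single $V_n$, so the ``overlapping annuli'' do not deliver the $L_n$ directly. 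You must either invoke a shrinking lemma (the $V_n$ form a locally finite open cover of $P$; take open $V_n'$ with $\ov{V_n'}\subseteq V_n$, $\bigcup V_n'\supseteq P$, and set $L_n:=\ov P\cap\ov{V_n'}$) or fall back on a two-term bound $G(\nu_{n-1}+\nu_n)>1$ on $\ov P\cap(\ov W_n\setminus W_{n-1})$, which is essentially the paper's device. Either fix is short, but it has to be written.

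A minor improvement: choose the centres $x_j$ of $\nu_n$ in $P_0\cap V_n$ (possible since $P_0\cap V_n$ is dense in $\ov P\cap V_n$). Then $\supp(\nu_n)\subseteq U_n$ exactly, and both the estimate $G\nu_n\le 2^{-n}\ve$ on $W_{n-2}\cup W_{n+2}^{\,c}$ and the tail bound for $n\ge n_0+2$ follow without any ``shrinking $\rho_n$ to absorb a factor''.
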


                      \begin{proof}[Proof {\rm (cf.\ the proof of  \cite[Lemme 2]{choquet-gd})}]
                        Let $\delta:= 1\wedge(\ve/2)$. 
                          By Lemma \ref{lemme-2}, there exist an open neighborhood $V$ of $P$ in $U$
                        and  
                        $\mu\in \M_\delta(\ov P\cap V) $ such that
                        \begin{equation}\label{previous}
                           G\mu>2 \mbox{ on } \ov P\cap V, \quad G\mu<\infty\mbox { on }V^c\und G\mu<\delta\mbox{  on }U^c.
                     \end{equation} 
                     Let $(V_k)$ be an exhaustion of $V$. For $k\in\nat$, we define (taking $V_{-1}=V_0:=\emptyset$)  
                                             \begin{equation*}
                            P_k:= P_0\cap(V_k\setminus V_{k-1})  \und W_k:= V_{k+1}\setminus \ov V_{k-2}
                          \end{equation*}
                          so that $W_k$ is an open neighborhood of $\ov P_k$, see Figure 5. 
                                         \begin{center}
       \includegraphics[width=8cm]{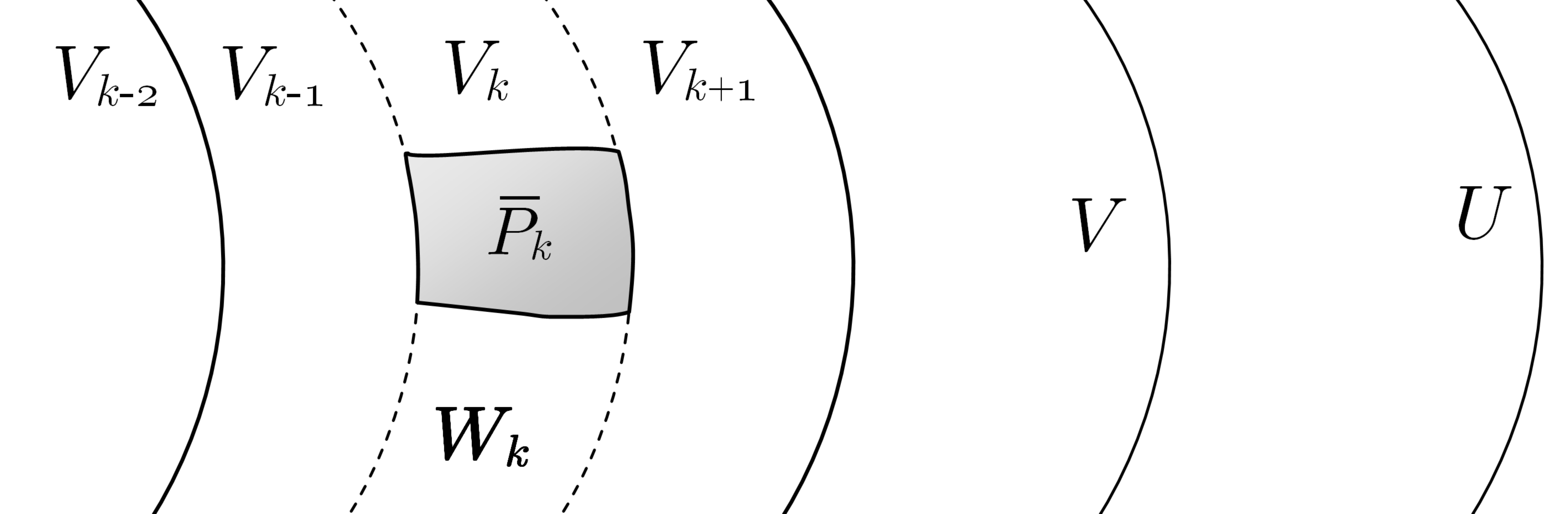}\\
    {\small Figure 5. The open neighborhood $W_k$ of $\ov P_k$}
\end{center}
Clearly, every point in
                          $\ov P\cap (V_k\setminus V_{k-1})$ is contained
                          in the closure of $P_{k-1}\cup P_k$. Hence we have $\bigcup_{k\in\nat} \ov P_k=\ov P\cap V$.
                          
                        We choose $\mu_k\in \M(\ov P_k)$  with $\mu=\sum_{k\in\nat} \mu_k$ and 
                                        approximating sequences~$(\mu_k^{(n)})_{n\in\nat}$   for~$\mu_k$ in~$\M(P_k)$, 
                          see (\ref{approx}). 
                          So, for every $k\in\nat$,  the sequence  $(\mu_k^{(n)}) $ is weak$^\ast$-convergent to $\mu_k$
                          and the sequence $(G\mu_k^{(n)}) $ converges   to $G\mu_k$ uniformly on $W_k^c$.
                          
                          For the moment, we fix $k\in\nat$.
                       There exists $l_k\in\nat$ such that, for all $n\ge l_k$, 
                                   \begin{equation}\label{est-k}
                                    |G\mu_k^{(n)}-G\mu_k|< 2^{-k}\delta \on W_k^c.
                                  \end{equation} 
                                 Let $\tau_k:=\sum_{|m-k|>1} \mu_m$.  Then $G\tau_k$ is continuous on $\ov P_k$
                                 and, by (\ref{previous}),
                             \begin{equation*} 
                                  G(\mu_{k-1}+\mu_k+\mu_{k+1})>2-G\tau_k
                                  \on   \ov P_k.\footnote{To be formally correct we have to omit, here and below, the term with
                                   subscript $k-1$ if $k=1$.}
\end{equation*} 
                                 By Lemma \ref{weak} (applied with $K:=\ov P_{k-1}\cup \ov P_k\cup \ov P_{k+1}$ and $L:=\ov P_k$),
                                 there exists $m_k\ge l_k$ such that, for all $n\ge m_k$,  
                        \begin{equation}\label{kkk}          
                          G(\mu_{k-1} ^{(n)}+\mu_k ^{(n)}+\mu_{k+1} ^{(n)} )> 2-G\tau_k \on   \ov P_k.
                        \end{equation}

                        We now define                        
                  \begin{equation*}
                    n_k:=m_{k-1}\vee  m_k\vee m_{k+1} , \quad  \nu_k :=\mu_k^{(n_k)}
                    \und \nu:= \sum\nolimits_{k\in\nat}\nu_k.
                    \end{equation*} 
                    Then $\nu\in \M (P_0)$, $\|\nu\|=\|\mu\|\le \delta$, and $G\nu<\infty$ on $V\setminus P_0$,
                          since   $\nu$ is supported by a~subset of $P_0$ having no
                            accumulation points in $V$. By (\ref{est-k}),
                            \begin{equation*}
                              G\nu\le G\mu + \sum\nolimits_{k\in\nat} |G\nu_k-G\mu_k|< G\mu+\delta \on V^c.
                            \end{equation*}
                            So, by (\ref{previous}),  $G\nu<\infty$ on $V^c$ and $G\nu<2\delta\le \ve$ on $U^c$.
                          By (\ref{kkk}), for every $k\in\nat$,
                            \begin{equation*}
                              G\nu\ge G(\nu_{k-1}+\nu_k+\nu_{k+1}+\tau_k)-\sum\nolimits_{|m-k|>1}|G\nu_m-G\mu_m|
                              > 2-\delta\ge 1
                            \end{equation*}
                            on $  \ov P_k$. Thus $G\nu>1$ on the set $\ov P\cap V$ containing $P$.
                                                  \end{proof} 
                    
                                                  \begin{proof}[Proof of {\rm (i)\,$\Rightarrow$\,(iii)} in Theorem \ref{main-second}]
  Let $(U_m)$ be a decreasing sequence of open sets in $X$ such that $P=\bigcap_{m\in\nat} U_m$.
  By Lemma \ref{lemme-4}, there are $\nu_m\in \M(P_0)$, $m\in\nat$,  such that $\|\nu_m\|\le 2^{-m}$,
  $G\nu_m> 1$ on $P $,
  $G\nu_m<\infty$ on $X\setminus P_0$ and  $G\nu_m<2^{-m}$   on~$X\setminus U_m$.
  Obviously, $\nu:=\sum_{m\in\nat}\nu_m\in \M(P_0)$ and $\{G\nu=\infty\}=P$.
  \end{proof} 

  \section{The general case}

Assuming that $G$ has the local triangle property
there is a locally finite covering of~$X$ by relatively compact open sets $U_n$ such that, for each $n\in\nat$,
the restriction of~$G$ on $U_n\times U_n$ has the triangle property.
 
 Let us consider  $ P\subset X$, a countable dense set $P_0$ in $P$,  and $\mu\in \M_1(X)$
 such that $G\mu=\infty$ on $P$. For $n\in\nat$, we introduce
 \begin{equation*}
   P_n:=P\cap U_n  \und     \mu_n:=1_{U_n}\mu\in\M_1(U_n).
 \end{equation*}
  By (\ref{gmu-finite}),  $ G(1_{U_n^c}\mu)<\infty$ on $U_n$, and hence 
  \begin{equation}\label{Gnn}
      G\mu_n=\infty\mbox{ on } P_n.
 \end{equation}

a)  If $P$ is an $F_\sigma$-set, then every $P_n$ is an $F_\sigma$-set, and 
     applying Theorem \ref{main-first} to~$U_n$ and $G|_{U_n\times U_n}$,
     we obtain $\nu_n\in\M_1(P_n)$ such that $G\nu_n=\infty$   
    on $P_n$.  Then  clearly $\nu:=\sum_{n\in\nat} 2^{-n}\nu_n\in \M_1(P)$ and
    $G\nu=\infty$ on $ P$     completing the proof of Theorem~\ref{main-first}.
    A straightforward modification yields   Corollary~\ref{p0-evans}. 
    
   b)  Let us next suppose that $P$ is a $G_\delta$-set and let $n\in\nat$. Then $P_n$ is a $G_\delta$-set
    and an application of Theorem \ref{main-second} to $U_n$ and $G|_{U_n\times U_n}$ yields 
   $      \nu_n\in\M_1(P_0\cap U_n) $ such that $\{x\in U_n: G\nu_n(x)=\infty\}=P_n$. 
            By (\ref{gmu-finite}), $G\nu_n<\infty$  on $U_n^c$, and therefore
            \begin{equation}\label{choqu}
              \{x\in X\colon G\nu_n(x)=\infty\}=P_n.
              \end{equation} 
              Obviously,      
                \begin{equation*}
        \nu:=\sum\nolimits_{n\in\nat} 2^{-n} \nu_n \in \M_1(P_0) \und G\nu=\infty\mbox{ on } P.
        \end{equation*} 
        To show that $G\nu<\infty$ outside $P$, we fix $n\in\nat$ and note  that the set $I_n$ of all $k\in\nat$
        such that $U_k\cap U_n\ne\emptyset$ is finite. Defining $\rho_n:=\sum_{k\in I_n^c} 2^{-k}\nu_k$ we know
               that $G\rho_n<\infty$ on $U_n$,   by (\ref{gmu-finite}).  Hence, using (\ref{choqu}), 
        \begin{equation*}
          G\nu=G\rho_n+\sum\nolimits_{k\in I_n} 2^{-n} G\nu_k< \infty \on U_n\setminus P.
          \end{equation*} 
          Thus $\{G\nu=\infty\}=P$.   

        c)   To complete the proof of Theorem \ref{main-second} we suppose that $\{G\mu=\infty\}=P$ and have
        to show that $P$ is a $G_\delta$-set. Let $n\in\nat$. By  (\ref{Gnn}), $\{x\in U_n\colon G\mu_n(x)=\infty\}=P_n$.
      By Theorem \ref{main-second}, applied to $U_n$ and $G|_{U_n\times U_n}$, we obtain that $P_n$ is a $G_\delta$-set.
      So there exist open neighborhoods $W_{nm}$, $m\in\nat$,  of $P_n$ in $U_n$ such that $W_{nm}\downarrow P_n$
      as~$m\to \infty$. Since $I_n$ is finite, we then easily see that   
       $    W_m:=\bigcup\nolimits_{n\in\nat} W_{nm} $
       is decreasing to $\bigcup\nolimits_{n\in\nat} P_n=P$
      as~$m\to \infty$. Thus $P$ is a $G_\delta$-set, and we are done.

      \section{Appendix: Application   to the PWB-method}
      In this section we shall recall the solution to the generalized Dirichlet problem for balayage spaces
      by the Perron-Wiener-Brelot method and how (a~generalization of) Evans theorem enables us to use only
      \emph{harmonic} upper and lower functions.
      
      So let $(X,\W)$ be a balayage space (where the assumption in Section \ref{intro} may be satisfied or not).
      Let $\B(X)$, $\C(X)$ denote the set of all Borel measurable numerical functions on~$X$,
      continuous real functions on $X$, respectively. Let $\Po$ be the set of all \emph{continuous real potentials}
      for $(X,\W)$, that is
\begin{equation*}
  \mathcal P:=\{p\in \W\cap \C(X)\colon \exists\ q\in \W\cap \C(X),\, q>0, \,p/q\to 0 \mbox{ at infinity}\}, 
\end{equation*}
see \cite{BH, H-course} for a thorough treatment and, for example, \cite{ H-compact, H-prag-87}
 for an introduction to balayage spaces.

We recall that, for all  open sets $V$ in $X$ and $x\in X$,  we have  positive Radon
measures $\vx^\vc$  on $X$, supported by $\vc$ and  characterized by
\begin{equation*}
  \int p\,d \vx^\vc=R_p^\vc(x):=\inf\{w(x)\colon w\in \W,\,w\ge p\mbox{ on }V^c\} , \qquad p \in \Po,
  \end{equation*} 
so that, obviously, $\vx^\vc=\delta_x$ if $x\in \vc$. They lead to  \emph{harmonic kernels} $H_V$ on $X$:
  \begin{equation*}
    H_Vf(x):=\int f\,d\vx^\vc, \qquad f\in\B^+(X),\, x\in X.
    \end{equation*} 

    Let us now fix  an open set $U$  in $X$ for which we shall consider the generalized Dirichlet problem
    (see \cite[Chapter VII]{BH}). Let $\V(U)$ denote the set of all open sets~$V$ such that $\ov V$ is compact
    in $U$, and let  $\hypo$ be the set of all   functions $u\in\B(X)$  which are \emph{hyperharmonic on~$U$}, that is,
    are lower semicontinuous on $U$ and satisfy 
    \begin{equation*}
        -\infty<    H_Vu(x)\le u(x)\mbox{  for all  }x\in V\in \V(U).
      \end{equation*}
      Then $\H(U):= \hypo\cap (-\hypo)$
      is the set of    functions which are \emph{harmonic on~$U$},  
    \begin{equation*} 
     \H(U) =\{h\in \B(X)\colon h|_U\in \C(U),\ H_Vh(x)=h(x) \mbox{ for all }x\in V\in\V(U)\}.
      \end{equation*} 

      A function $f\colon X\to \ov \real$ is called \emph{lower $\Po$-bounded}, \emph{$\Po$-bounded}
      if there is some $p\in\Po$ such that
      $f\ge -p$, $|f|\le p$, respectively. For every numerical function~$f$ on~$X$, we have the set of all \emph{upper functions}
    \begin{equation*} 
      \U_f^U:=\{u\in \hypo\colon u\ge f\mbox{ on }\uc,\ u\mbox{  lower $\Po$-bounded and  l.s.c.\ on $X$}\}, 
    \end{equation*}
    the set $  \L_f^U:=-\U_{-f}^U$ of all \emph{lower functions for $f$ with respect to $U$}, and the definitions
       \begin{equation*}\label{bar}
  {\ov H}_f^U :=\inf \,\U_f, \qquad {\underbar H}_f^U:=\sup \,\L_f^U. 
\end{equation*} 

For every $p\in \Po$, there exists $q\in \Po$, $q>0$, such that $p/q\to 0$ at infinity. Hence we may replace $\U_f^U$
by the smaller set of upper functions, which are positive  outside a~compact in $X$,  without changing the infimum
(if $f\ge -p$ consider $f+\ve q$, $\ve>0$). 

To avoid technicalities we state the \emph{resolutivity result}  (see \cite[VIII.2.12]{BH}) only for $\Po$-bounded functions:

     \begin{theorem}\label{pwb}
       For every $\Po$-bounded $f\in\B(X)$,
       \begin{equation*}
         H_Uf={\ov H}_f^U={\underbar H}_ f^U \in \H(U).
         \end{equation*} 
     \end{theorem}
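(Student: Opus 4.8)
The plan is to isolate the single substantive assertion, namely that every $\Po$-bounded $f\in\B(X)$ is \emph{resolutive} in the sense that ${\ov H}_f^U={\underbar H}_f^U$; the identification of this common value with the harmonic-measure integral $H_Uf$ will then be automatic. I would first record two soft inequalities valid for arbitrary $\Po$-bounded $f$ (note $|H_Uf|\le H_Up\le p<\infty$ if $|f|\le p\in\Po$). The comparison inequality: for $u\in\U_f^U$ and $v\in\L_f^U$ the function $u-v=u+(-v)$ is hyperharmonic on $U$, lower semicontinuous and lower $\Po$-bounded on $X$, and satisfies $u-v\ge f-f=0$ on $\uc$; the minimum principle for hyperharmonic functions forces $u-v\ge0$, whence ${\underbar H}_f^U\le{\ov H}_f^U$. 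The domination inequality: for $u\in\U_f^U$, hyperharmonicity together with lower $\Po$-boundedness (passing to the limit along an exhaustion $V_n\uparrow U$) gives $u\ge H_Uu=\int u\,d\vx^\uc\ge\int f\,d\vx^\uc=H_Uf$, using $u\ge f$ on the support $\uc$ of $\vx^\uc$; hence ${\ov H}_f^U\ge H_Uf$, and applying this to $-f$ yields ${\underbar H}_f^U\le H_Uf$. Thus ${\underbar H}_f^U\le H_Uf\le{\ov H}_f^U$, so resolutivity alone will pin $H_Uf$ to the common value.

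The core is therefore resolutivity, which I would establish by a class argument starting from potentials. For a continuous potential $p\in\Po$ the r\'eduite $H_Up=R_p^\uc$ is harmonic on $U$, equals $p$ on $\uc$, and lies simultaneously in $\U_p^U$ and $\L_p^U$; hence $p$, and by linearity every difference $p-q$ of continuous potentials, is resolutive with solution $H_U(p-q)$. Using the sublinearity ${\ov H}_{f+g}^U\le{\ov H}_f^U+{\ov H}_g^U$, positive homogeneity, and ${\underbar H}_f^U=-{\ov H}_{-f}^U$, a continuous $\Po$-bounded $f$ approximated by differences $h_n=p_n-q_n$ with $\Po$-controlled error $|f-h_n|\le\ve_n p$ ($\ve_n\downarrow0$) inherits resolutivity, since $|{\ov H}_f^U-{\ov H}_{h_n}^U|\le{\ov H}_{|f-h_n|}^U\le\ve_n H_Up\le\ve_n p\to0$ and likewise for ${\underbar H}^U$. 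This reduces the continuous case to the density of $\Po-\Po$ in the continuous $\Po$-bounded functions, a structural feature of the balayage space.

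Finally I would pass from continuous to Borel $\Po$-bounded $f$ by a monotone-class argument: the resolutive functions form a class closed under uniform limits and under bounded monotone sequential limits, the latter resting on the convergence theorem ${\ov H}_{f_n}^U\uparrow{\ov H}_f^U$ for $f_n\uparrow f$ (with the matching statement for ${\underbar H}^U$), which is where the countability axioms of the balayage space enter. Since this class contains $\Po-\Po$, it contains all continuous $\Po$-bounded functions and hence every $\Po$-bounded $f\in\B(X)$; the common value is then $H_Uf$ by the sandwich above. Harmonicity of the solution follows from the transitivity of the harmonic kernels, $H_V\circ H_U=H_U$ for $V\in\V(U)$, so that $H_Uf$ reproduces itself under every such $H_V$ and therefore lies in $\H(U)$.

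The main obstacle is the resolutivity of continuous functions together with the Borel extension: concretely, the $\Po$-controlled uniform approximation by differences of potentials (which requires $\Po-\Po$ to be sufficiently rich) and the monotone-convergence theorem for ${\ov H}^U$ are the load-bearing structural results of the theory, whereas the minimum principle, the domination $u\ge H_Uu$, and the kernel transitivity are comparatively standard and enter only to produce the sandwich and the final harmonicity.
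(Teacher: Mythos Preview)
The paper does not prove this theorem at all: it is stated as a known \emph{resolutivity result} with an explicit citation to \cite[VIII.2.12]{BH}, so there is no in-paper argument to compare against. Your sketch follows the standard route taken in that reference (minimum principle to get ${\underbar H}_f^U\le{\ov H}_f^U$, sandwiching $H_Uf$ between them, resolutivity first for $p\in\Po$ via $R_p^{\uc}$, then extension by linearity, density of $\Po-\Po$, and a monotone/functional-class argument to reach all $\Po$-bounded Borel $f$), so in spirit you are reproducing exactly what the cited source does.

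One caution on details you elide: in a general balayage space the measures $\vx^{\uc}$ need not be supported on $\partial U$, and the reduced function $R_p^{\uc}$ need not be lower semicontinuous on $X$ without passing to its l.s.c.\ regularization; likewise the ``density of $\Po-\Po$'' and the monotone-convergence step for $\ov H^U$ require the specific axioms of balayage spaces (in particular the natural order decomposition and the countable-base hypothesis). These are precisely the ``technicalities'' the paper chose to avoid by citing \cite{BH}, and your proposal correctly flags them as the load-bearing structural inputs rather than attempting to redo them here.
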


     \begin{remark}\label{harmonic-case}
       {\rm
         Let us indicate how the general approach above yields the solution to the generalized Dirichlet problem for harmonic
         spaces in the way the reader may be more familiar with.

         So let us assume for a moment that the harmonic  measures $\vx^\vc$,
         $x\in V$, for our balayage space  are supported by~$\partial V$ so that (hyper)harmonicity on $U$ does not depend on values on $\uc$,
         and let us identify functions on $U$ with functions on $X$ vanishing outside $U$.

         Let $f$ be a Borel measurable function  on $\partial U$ which is $\Po$-bounded (amounting to boundedness
         if $U$ is relatively compact)
      and let    $\tilde \U_f^U$  be the set of all functions~$u$  on~$U$ which are hyperharmonic on $U$ and  satisfy
  \begin{equation}\label{liminf}
    \liminf\nolimits_{x\in U,\,  x\to z}u(x)\ge f(z)\quad\mbox{  for every }z\in \partial U.
  \end{equation}
  If $u\in \U_f^U$, then $\tilde u:=1_Uu$ is hyperharmonic on $U$ and
  $\liminf\nolimits_{x\to z}\tilde u(x)\ge u(z)\ge f(z)$ for every $z\in \partial U$, hence $\tilde u\in \tilde \U_f^U$.
  If, conversely, $\tilde u$ is a function in $\tilde \U_f^U$ then,   extending it to $X$ by $\liminf_{x\to z} u(z)$  for $z\in\partial U$
  and $\infty$ on $X\setminus \ov U$,  we get a~function $u\in \U_f^U$.
   Therefore Theorem \ref{pwb} yields  that  $h\colon x\mapsto \vx^\uc(f)$, $x\in U$, is harmonic on $U$ and
   \begin{equation*}
     h(x)=\inf \tilde \U_f^U(x)=\sup \tilde \L_f^U(x)  \quad\mbox{  for every } x\in U.
   \end{equation*}
    }
\end{remark}

\medskip

Let {$\ureg$} denote the set of   \emph{regular} boundary points $z$  of $U$, that is, $z\in \partial U$ such that
$\lim_{x\to z} H_Uf(x)=f(z)$ for all $\Po$-bounded   $f\in \C(X)$,   and let $\uirr$ be the set of  \emph{irregular} boundary
points of $U$,  $\uirr:=\partial U\setminus \ureg$.

  \begin{corollary}\label{evans-corollary}
    Suppose that there is   a~lower semicontinuous function~$h_0\ge 0$ on~$X$
      which is harmonic on $U$  and satisfies  $h=\infty$ on~$\uirr$.  
 Then
    \begin{equation*}
       H_Uf=\inf  \,  \U_f^U\cap \H(U) =\sup \,\L_f^U\cap \H(U) \quad\mbox{ for every $\Po$-bounded }  f\in\B(X).
      \end{equation*} 
     \end{corollary}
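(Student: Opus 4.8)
The plan is to prove the two displayed identities by showing that, for $\Po$-bounded $f$, the harmonic function $H_Uf$ can be approximated from above by \emph{harmonic} upper functions, the Evans-type function $h_0$ serving to repair the behaviour at the irregular boundary points. First the two ``cheap'' inequalities. Since $\U_f^U\cap\H(U)\subset \U_f^U$, Theorem \ref{pwb} gives $\inf(\U_f^U\cap\H(U))\ge \inf\U_f^U=H_Uf$. The statement for lower functions then reduces to the one for upper functions by symmetry: as $\L_f^U=-\U_{-f}^U$ and $\H(U)=\hypo\cap(-\hypo)=-\H(U)$, we have $\L_f^U\cap\H(U)=-(\U_{-f}^U\cap\H(U))$, so $\sup(\L_f^U\cap\H(U))=-\inf(\U_{-f}^U\cap\H(U))$; since $-f$ is again $\Po$-bounded and $H_U(-f)=-H_Uf$, it suffices to prove $\inf(\U_f^U\cap\H(U))=H_Uf$.

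For the remaining inequality I would produce, for each $\ve>0$, the candidate
\begin{equation*}
  w_\ve:=H_Uf+\ve h_0.
\end{equation*}
Because $H_Uf\in\H(U)$ by Theorem \ref{pwb}, $h_0\in\H(U)$ by hypothesis, and the kernels $H_V$ are linear, $w_\ve$ is harmonic on $U$; moreover $w_\ve=f+\ve h_0\ge f$ on $\uc$ (as $h_0\ge 0$), and $w_\ve$ is lower $\Po$-bounded (if $|f|\le p\in\Po$ then $H_Uf\ge -H_Up\ge -p$, while $\ve h_0\ge0$). Thus the only thing left for membership in $\U_f^U$ is lower semicontinuity on all of $X$. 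On $U$ this is clear ($w_\ve|_U\in\C(U)$), and after reducing to continuous $f$ it is clear on the interior of $\uc$, where $w_\ve=f+\ve h_0$ is a sum of a continuous and a lower semicontinuous function; so the issue is confined to $\partial U$.

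Here the Evans function enters. At $z\in\ureg$ one has $H_Uf(x)\to f(z)$ as $x\to z$ (the defining property of a regular point, for continuous $f$), and the lower semicontinuity of $h_0\ge0$ yields $\liminf_{x\to z}w_\ve(x)\ge f(z)+\ve h_0(z)=w_\ve(z)$; at $z\in\uirr$ one has $h_0(z)=\infty$, so lower semicontinuity of $h_0$ together with the lower bound on $H_Uf$ forces $\liminf_{x\to z}w_\ve(x)=\infty=w_\ve(z)$. Hence $w_\ve\in\U_f^U\cap\H(U)$. Since $h_0\in\H(U)$ is finite on $U$, letting $\ve\downarrow0$ gives $\inf(\U_f^U\cap\H(U))\le\inf_{\ve>0}w_\ve=H_Uf$ on $U$, which together with the easy inequality yields the asserted identity.

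The main obstacle I expect is precisely the verification that $w_\ve$ is a genuine upper function, i.e.\ the lower semicontinuity on $X$ and the boundary dichotomy above: this is exactly where the two defining properties of $h_0$ are indispensable --- its lower semicontinuity controlling the regular points, and its being identically $\infty$ on $\uirr$ swamping the possible jump of $H_Uf$ at the irregular points. A secondary, more routine point is the reduction of a general $\Po$-bounded Borel $f$ to continuous $f$ (needed both for the regular-point limit and for semicontinuity off $U$), which I would carry out by the standard approximation underlying the resolutivity Theorem \ref{pwb}; as in Remark \ref{harmonic-case}, the whole argument is cleanest when the $\vx^\vc$ are carried by the boundaries, but it goes through in the stated generality.
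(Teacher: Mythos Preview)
Your approach is essentially the paper's: build harmonic upper functions of the form $H_Ug+\ve h_0$, verify lower semicontinuity on $X$ by splitting $\partial U$ into regular points (where the defining limit applies) and irregular points (where $h_0=\infty$ dominates any bounded-below term), and let $\ve\downarrow 0$ using that $h_0$, being harmonic, is real-valued on $U$. The symmetry reduction of the lower-function statement to the upper one is also the same.

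The one place where your sketch is imprecise and the paper is more careful is the ``reduction to continuous $f$''. This does \emph{not} work directly: for a general $\Po$-bounded Borel $f$ one need not have
\[
  H_Uf(x)=\inf\{H_U\vp(x):\vp\in\C(X),\ \vp\ge f,\ \vp\ \Po\text{-bounded}\}
\]
(take $f=1_V$ with $\ve_x^{U^c}(\partial V)>0$; every continuous $\vp\ge 1_V$ satisfies $\vp\ge 1$ on $\ov V$). The paper therefore first passes from Borel $f$ to \emph{lower semicontinuous} $g_n\ge f$ via outer regularity of the measure $\ve_x^{U^c}$, so that $H_Uf(x)=\inf_n H_Ug_n(x)$, and then for l.s.c.\ $g$ checks $\liminf_{x\to z}H_Ug(x)\ge g(z)$ at regular $z$ by approximating $g$ from \emph{below} by continuous $\vp_n\uparrow g$. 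This two-step approximation is exactly ``the standard approximation underlying the resolutivity Theorem'' you invoke, so your plan is right in spirit; just replace ``continuous'' by ``lower semicontinuous'' as the intermediate class, with the continuous functions entering only from below to handle the regular boundary points.
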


     \begin{proof}  
       a)  Let $g$ be $\Po$-bounded and lower semicontinuous on $X$. Then there exist $\Po$-bounded  $\vp_n$ in~$\C(X)$, $n\in\nat$,
       such that $\vp_n\uparrow g$. For all $z\in \ureg$ and $n\in\nat$,
       \begin{equation*}
         \liminf\nolimits_{x\to z} H_Ug(x)\ge \liminf\nolimits_{x\to z} H_U\vp_n(x)=\vp_n(z), 
       \end{equation*}
      and hence $  \liminf_{x\to z} H_Ug(x)\ge g(z)$. 
       Clearly,
       \begin{equation*}
         h_n:=H_Ug+(1/n) h_0\in \H(U)
       \end{equation*}
       satisfies $\lim_{x\to z} h_n(x)=\infty$ for all   $z\in\uirr$, and  $h_n$ is lower semicontinuous on~$X$.
        Thus         $           h_n\in  \U_f^U\cap \H(U)$.

        b) Let $f\in \B(X) $ be $\Po$-bounded, $x\in X$.
        There exists a decreasing sequence~$(g_n)$ of  $\Po$-bounded  lower semicontinuous functions
         on $X$  such that $g_n\ge f$ for every  $n\in\nat$ and
         \begin{equation*}
           \int f\, d\vx^\uc=\inf\nolimits_{n\in\nat} \int g_n\,d\vx^\uc,
         \end{equation*}
         that is, $H_Uf(x)=\inf_{n\in\nat}  H_Ug_n(x)$. Hence 
         \begin{equation*}
           H_Uf(x)=\inf\nolimits_{n\in\nat}  (H_U g_n +(1/n)h_0) (x),
         \end{equation*}
         where $H_Ug_n+(1/n)h_0\in  \U_f^U\cap \H(U)$, by (a). Thus $H_Uf=\inf  \U_f^U\cap \H(U)$.

         c) Further,  $H_Uf=-H_U(-f)=- \inf   \U_{-f}^U\cap \H(U)= \sup  \L_f^U\cap \H(U)$.
         \end{proof}

         \begin{remarks}
{\rm
  1. For harmonic spaces, the result in Corollary \ref{evans-corollary} has been proven in \cite{cornea}, where the solution
  to the generalized Dirichlet problem is obtained using \emph{controlled convergence}.

  2. In general, the set $\uirr$ is a semipolar $F_\sigma$-set. Of course, if $(X,\W)$ satisfies Hunt's hypothesis (H), that is,
  if every semipolar set is polar, then $\uirr$ is polar for every $U$.
   Let us note that (H) holds if $X$ is an abelian group such that $\W$ is invariant under translations
  and $(X,\W)$ admits a Green function having the local triangle property (see \cite{HN-(H)}).

  By Theorem \ref{main-first},
  we obtain that in this situation (which covers the classical case, many translation-invariant second order  PDO's
  as well as Riesz potentials, that is, $\a$-stable processes, and many more general L\'evy processes)
  the assumption of Corollary~\ref{evans-corollary} holds. 
  }
\end{remarks}

\bibliographystyle{plain}

{\small \noindent 
Wolfhard Hansen,
Fakult\"at f\"ur Mathematik,
Universit\"at Bielefeld,
33501 Bielefeld, Germany, e-mail:
 hansen$@$math.uni-bielefeld.de}\\
{\small \noindent Ivan Netuka,
Charles University,
Faculty of Mathematics and Physics,
Mathematical Institute,
 Sokolovsk\'a 83,
 186 75 Praha 8, Czech Republic, email:
netuka@karlin.mff.cuni.cz}

\end{document}